\documentclass[a4paper,11pt]{amsart}
 \usepackage{amsmath,amsthm,amssymb,latexsym,enumerate, color}
 \usepackage[linkbordercolor={1 0 0}, citebordercolor={0 1 0}]{hyperref}

\numberwithin{equation}{section}

\begin{document}

\newtheorem{thm}{Theorem}[section]
\newtheorem{prop}[thm]{Proposition}
\newtheorem{lem}[thm]{Lemma}
\newtheorem{cor}[thm]{Corollary}
\newtheorem{rem}[thm]{Remark}
\newtheorem*{defn}{Definition}

\newcommand{\DD}{\mathbb{D}}
\newcommand{\NN}{\mathbb{N}}
\newcommand{\ZZ}{\mathbb{Z}}
\newcommand{\QQ}{\mathbb{Q}}
\newcommand{\RR}{\mathbb{R}}
\newcommand{\CC}{\mathbb{C}}
\renewcommand{\SS}{\mathbb{S}}

\renewcommand{\theequation}{\arabic{section}.\arabic{equation}}

\def\Xint#1{\mathchoice
{\XXint\displaystyle\textstyle{#1}}%
{\XXint\textstyle\scriptstyle{#1}}%
{\XXint\scriptstyle\scriptscriptstyle{#1}}%
{\XXint\scriptscriptstyle\scriptscriptstyle{#1}}%
\!\int}
\def\XXint#1#2#3{{\setbox0=\hbox{$#1{#2#3}{\int}$}
\vcenter{\hbox{$#2#3$}}\kern-.5\wd0}}
\def\ddashint{\Xint=}
\def\dashint{\Xint-}

\newcommand{\Erfc}{\mathop{\mathrm{Erfc}}}    
\newcommand{\supp}{\mathop{\mathrm{supp}}}    
\newcommand{\re}{\mathop{\mathrm{Re}}}   
\newcommand{\im}{\mathop{\mathrm{Im}}}   
\newcommand{\dist}{\mathop{\mathrm{dist}}}  
\newcommand{\link}{\mathop{\circ\kern-.35em -}}
\newcommand{\spn}{\mathop{\mathrm{span}}}   
\newcommand{\ind}{\mathop{\mathrm{ind}}}   
\newcommand{\rank}{\mathop{\mathrm{rank}}}   
\newcommand{\ol}{\overline}
\newcommand{\pa}{\partial}
\newcommand{\ul}{\underline}
\newcommand{\diam}{\mathrm{diam}}
\newcommand{\lan}{\langle}
\newcommand{\ran}{\rangle}
\newcommand{\tr}{\mathop{\mathrm{tr}}}
\newcommand{\diag}{\mathop{\mathrm{diag}}}
\newcommand{\dv}{\mathop{\mathrm{div}}}
\newcommand{\na}{\nabla}
\newcommand{\nr}{\Vert}

\newcommand{\al}{\alpha}
\newcommand{\be}{\beta}
\newcommand{\ga}{\gamma}  
\newcommand{\Ga}{\Gamma}
\newcommand{\de}{\delta}
\newcommand{\De}{\Delta}
\newcommand{\ve}{\varepsilon}
\newcommand{\fhi}{\varphi} 
\newcommand{\la}{\lambda}
\newcommand{\La}{\Lambda}    
\newcommand{\ka}{\kappa}
\newcommand{\si}{\sigma}
\newcommand{\Si}{\Sigma}
\newcommand{\te}{\theta}
\newcommand{\zi}{\zeta}
\newcommand{\om}{\omega}
\newcommand{\Om}{\Omega}

\newcommand{\cC}{\mathcal{C}}
\newcommand{\cG}{{\mathcal G}}
\newcommand{\cH}{{\mathcal H}}
\newcommand{\cI}{{\mathcal I}}
\newcommand{\cJ}{{\mathcal J}}
\newcommand{\cK}{{\mathcal K}}
\newcommand{\cL}{{\mathcal L}}
\newcommand{\cM}{\mathcal{M}}
\newcommand{\cN}{{\mathcal N}}
\newcommand{\cP}{\mathcal{P}}
\newcommand{\cR}{{\mathcal R}}
\newcommand{\cS}{{\mathcal S}}
\newcommand{\cT}{{\mathcal T}}
\newcommand{\cU}{{\mathcal U}}
\newcommand{\cX}{\mathcal{X}}

\title[Behaviour for game-theoretic $p$-caloric functions]{Short-time behaviour for \\ game-theoretic $p$-caloric functions}

\author{Diego Berti}
\address{Dipartimento di Matematica ed Informatica ``U.~Dini'',
Universit\` a di Firenze, viale Morgagni 67/A, 50134 Firenze, Italy.}
    \email{diego.berti@unifi.it}

\author{Rolando Magnanini} 
\address{Dipartimento di Matematica ed Informatica ``U.~Dini'',
Universit\` a di Firenze, viale Morgagni 67/A, 50134 Firenze, Italy.}
    \email{magnanin@math.unifi.it}
    \urladdr{http://web.math.unifi.it/users/magnanin}

\begin{abstract}
We consider the solution of $u_t-\De^G_p u=0$ in a (not necessarily bounded) domain, satisfying $u=0$ initially and $u=1$ on the boundary at all times. Here, $\De^G_p u$ is the {\it game-theoretic} or {\it normalized} $p$-laplacian. We derive new precise asymptotic formulas for short times, that generalize the work of S. R. S. Varadhan \cite{Va} for large deviations and that of the second author and S. Sakaguchi \cite{MS-AM} for the heat content of a ball touching the boundary. We also compute the short-time behavior of the $q$-mean of $u$ on such a ball. Applications to time-invariant level surfaces of $u$ are then derived.
\end{abstract}

\keywords{Evolutionary game-theoretic $p$-Laplace equation, initial-boundary value problems, short-time asymptotics}
    \subjclass[2010]{Primary 35K92; Secondary 35K20, 35B40, 35Q91}

\maketitle

\raggedbottom

\section{Introduction}

The {\it game-theoretic $p$-laplacian} is formally defined by
\begin{equation}
\label{G-laplace}
\De^G_p u=\frac1{p}\,|\na u|^{2-p} \dv\left\{ |\na u|^{p-2} \na u\right\}
\end{equation}
or
$$
\De^G_p u=\frac1{p}\,\left\{ \De u+(p-2)\frac{\lan\na^2 u \na u, \na u\ran}{|\na u|^2}\right\}
$$
and can be written as
$$
\De^G_p u=\frac1{p}\,|\na u|^{2-p} \De_p u,
$$
where
$$
\De_p u=\dv\left\{ |\na u|^{p-2} \na u\right\}
$$
is the classical $p$-laplacian.  We can suppose that $p\in(1,\infty]$, if we agree that for $\De_\infty^G$ we intend the limit of $\De_p^G$ as $p\to\infty$. Notice that $2 \De_2^G=\De$, the classical Laplace operator.
\par
The use of $\De_p^G$ finds applications in several fields. Certainly in game theory, in the study   of the  so-called {\it tug-of-war} games (for $p=\infty$, \cite{PSSW}) and their variants {\it with noise} (for $1<p<\infty$, \cite{PS}, \cite{MPR-SJMA}). The extremal case $p=1$ is related to the motion of hypersuperfaces by the \emph{mean curvature flow} (\cite{ES}). The $1$-homogeneity of $\De_p^G$ also makes the corresponding parabolic operator $\pa_t-\De^G_p$ scaling invariant, and
that is useful in the some techniques of \emph{image processing}, where the brightness of an initial image does not affect the evolution process. As shown in \cite{Do}, the choice of the parameter $p$ affects in which direction the brightness evolves.
\par
It is evident that, for $p\not=2$, $\De^G_p$ and $\De_p$ are both nonlinear. However, $\De_p^G$ is somewhat reminiscent of the lost linearity of the Laplace operator, since it is $1$-homogeneous, differently from $\De_p$, which is instead $(p-1)$-homogeneous. The nonlinearity of  $\De_p^G$ is indeed due to its non-additivity. Nevertheless, $\De_p^G$ acts additively if one of the relevant summands is constant and, more importantly, on functions of one variable and on radially symmetric functions. We shall see in the sequel that these last properties are decisive for the purposes of this paper.

\medskip

Let $\Om$ be a domain in $\RR^N$, $N\ge 2$, and let $\Ga$ be its boundary. In this paper, we shall consider properly generalized solutions of the following problem:
\begin{eqnarray}
&u_t=\De_p^G u \ &\mbox{ in } \ \Om\times(0,\infty), \label{G-heat} \\
&u=0  \ &\mbox{ on } \ \Om\times\{ 0\}, \label{initial} \\
&u=1  \ &\mbox{ on } \ \Ga\times(0,\infty). \label{boundary}
\end{eqnarray}
\par
We point out that equation \eqref{G-heat} is not variational and hence, since $\De^G_p$ is also not defined at spatial critical points of $u$, a convenient way to define
a solution of \eqref{G-heat} is in the {\it viscosity sense}. Besides the seminal work of Crandall, Ishii and Lions (\cite{CIL}), where one can find the foundations of that theory, 
 an appropriate list of references for the particular case of $\De_p^G$ includes \cite{AP}, \cite{BG-IUMJ}-\cite{BG-CPAA}, \cite{Do}, \cite{ES}, \cite{Ju}, \cite{JK}, \cite{JS}, \cite{MPR-SJMA}. The two conditions in \eqref{initial} and \eqref{boundary} should be intended in a classical sense, that is $u$ should be continuous on $\ol{\Om}\times[0,\infty)$ away from $\Ga\times\{0\}$. The existence of a solution of \eqref{G-heat}-\eqref{boundary} in this sense can be obtained by adapting arguments contained in\cite{BG-CPAA} and \cite{MPR-SJMA} (see Remark \ref{existence of solutions}).
\par
The concern of this paper is the study of the behaviour of the solution of \eqref{G-heat}-\eqref{boundary} when $t\to 0^+$.
When $p=2$, it is well-known that the asymptotics of the solution $u(x,t)$ is controlled by the distance of the point $x\in\Om$ from $\Ga$; in fact, by a formula first derived by Freidlin and Wentzell \cite{FW} by probabilistic methods and later adjusted by Varadhan (\cite{Va}) to an analytical framework, we know that
\begin{equation}
\label{varadhan}
\lim_{t\to 0^+} 2t \log u(x,t)=-d_\Ga(x)^2 \ \mbox{ uniformly for } \ x\in\ol{\Om},
\end{equation}
where
$$
d_\Ga(x)=\min_{y\in\Ga} |x-y| \ \mbox{ for } \ x\in\Om.
$$
Another proof of \eqref{varadhan}, based on arguments of the theory of viscosity solutions, is available in \cite{EI}.
\par
The second author and S. Sakaguchi used \eqref{varadhan} to prove another formula that links even more the short-time behaviour of the solution $u$ to the geometric features of the domain. Indeed, in \cite{MS-PRSE}, for a sufficiently smooth domain, it is showed that, if $B_R(x)\subset\Om$ is a ball centered at $x\in\Om$ and touching $\Ga$ at a unique point $y_x\in\Ga$ (hence $R=d_\Ga(x)=|x-y_x|$), then
\begin{equation}
\label{product-curvatures}
\lim_{t\to 0^+} t^{-\frac{N+1}{4}} \int_{B_R(x)} u(z,t)\,dz= 
\frac{c_N\,R^{\frac{N-1}{2}}}{\sqrt{\Pi_\Ga(y_x)}},
\end{equation}
where
\begin{equation}
\label{def-function-Pi}
\Pi_\Ga(y_x)=\prod\limits_{j=1}^{N-1}
\Bigl[1-R\,\ka_j(y_x)\Bigr].
\end{equation}
Here, $\ka_j(y_x), j=1, \dots, N-1$ are the pricipal curvatures of $\Ga$ at $y_x$, and $c_N$ is a constant only depending on the dimension. In \cite{MS-AM}, the integral in \eqref{product-curvatures} is referred to as the {\it heat content} of the ball $B_R(x)$.
\par
Generalizations of the two formulas \eqref{varadhan} and \eqref{product-curvatures} to nonlinear settings can be found in \cite{MS-PRSE}, \cite{Sa}, for the evolutionary $p$-Laplace equation, and in \cite{MS-AIHP}, for a class of non-degenerate fast diffusion equations. 
In order to obtain \eqref{varadhan}, one can merely assume that $\Om$, bounded or not, is such that $\Ga=\pa\left(\RR^N\setminus\ol{\Om}\right)$ (see \cite{Va}, for the linear case; \cite{MS-JDE2}, \cite{MS-MMAS}, for the various nonlinear cases;  \cite{MS-IUMJ}, \cite{MS-JDE}, \cite{MS-JDE2}, for the extension to unbounded domains). 
\par
The aim of this article is to extend \eqref{varadhan} and \eqref{product-curvatures} to the case $p\not=2$, that is when the problem \eqref{G-heat}-\eqref{boundary} is considered. 
In Subsections \ref{sec:first-order-general-domains} and \ref{sec:asymptotics-heat-content},  we shall in fact prove for $1<p\le\infty$ the following companions to formulas \eqref{varadhan} and 
\eqref{product-curvatures}:
\begin{equation}
\label{varadhan-p}
\lim_{t\to 0^+} 4t \log u(x,t)=-p'\,d_\Ga(x)^2, 
\end{equation}
uniformly on every compact subset of $\ol{\Om}$,\begin{equation}
\label{product-curvatures-p}
\lim_{t\to 0^+} t^{-\frac{N+1}{4}}\int_{B_R(x)}u(z,t)\,dz=\frac{c_N\,R^{\frac{N-1}{2}}}{(p')^\frac{N+1}{4}\,\sqrt{\Pi_\Ga(y_x)}}.
\end{equation}
Here $p'=p/(p-1)$ is, as usual, the conjugate exponent of $p$, and the constant $c_N$ can be explicitly computed (see Theorem \ref{th:asymptotics-heat-content-p}).
It is worth noticing that \eqref{varadhan-p} and \eqref{product-curvatures-p} differ from \eqref{varadhan} and \eqref{product-curvatures} only in the constants at the right-hand sides.
In Section \ref{sec:first-parabolic}, we shall also see how to extend the validity \eqref{varadhan-p} to the case in which the constant in \eqref{boundary} is
replaced by a function $h:\Ga\times(0,\infty)\to\RR$ bounded below and above by positive constants (see Corollary \ref{positive  boundary data}).
\par
The integral at the left-hand side of \eqref{product-curvatures-p} is just a multiple of the mean value of $u(\cdot,t)$ on $B_R(x)$.  This can be replaced by other significant statistical quantities related to $u$. One that seems to be particularly appropriate in this context is
the so-called {\it $q$-mean} $\mu_q^u(x,t)$ of $u(\cdot,t)$ on $B_R(x)$, that can be defined for $1<q\le\infty$ as 
\begin{multline}
\label{p-mean}
\mbox{the unique } \mu=\mu_q^u(x,t) \ \mbox{ such that } \\ 
\nr u(\cdot,t)-\mu\nr_{L^q(B_R(x))}\le\nr u(\cdot,t)-\la\nr_{L^q(B_R(x))} \ \mbox{ for all } \ \la\in\RR.
\end{multline}
We shall show the following scale invariant formula for $\mu_q^u(x,t)$:
\begin{equation}
\label{p-mean-asymptotics}
\lim_{t\to 0^+} \left(\frac{R^2}{t}\right)^{\frac{N+1}{4 (q-1)}} \mu_q^u(x,t)=\frac{c_{N,q}}{(p')^\frac{N+1}{4 (q-1)}\,\Pi_\Ga(y_x)^\frac1{2(q-1)}}
\end{equation}
(see Theorem \ref{th:asymptotics-p-mean} for the value of $c_{N,q}$).
\par
Formula \eqref{varadhan-p} thus extends to a game-theoretic realm the information given by  \eqref{varadhan}, that was originally conceived in the context of \emph{large deviations theory}, where it is related to the asymptotic probability that a sample path in a stochastic process remains in a domain for a given amount of time (see \cite{FW}, \cite{EI}). 
Besides this kind of application, \eqref{varadhan-p}  can be used to derive topological and geometric information about the solution of \eqref{G-heat}-\eqref{boundary}. For instance, it can help us to study the set of minimum points of $u$ that, due to \eqref{varadhan}, indeed approaches that of $d_\Ga$, when $t\to 0^+$ (as seen in \cite{MS-AN} or \cite{BMS} for the case $p=2$). 
\par
Another application is to the study of the so-called {\it stationary} or
{\it time-invariant} level surfaces of solutions of evolutionary partial differential equations. 
A surface $\Si\subset\Om$ (of co-dimension $1$) is said to be a {\it time-invariant level surface} for a solution $u$ of \eqref{G-heat}-\eqref{boundary}, if it is a level surface for $u(\cdot,t)$ for any time $t>0$, that is, if there is a function $a_\Si:(0,\infty)\to\RR$ such that
$$
u(x,t)=a_\Si(t) \ \mbox{ for any } \ (x,t)\in\Si\times(0,\infty).
$$
Thus, it is not difficult to infer that, under suitable sufficient assumptions on $\Si$, \eqref{varadhan} 
implies that a time-invariant surface $\Si$ is always {\it parallel} to $\Ga$. 
\par
Time-invariant level surfaces are known to enjoy remarkable symmetry properties (see \cite{MS-AM}-\cite{MS-MMAS}), \cite{MPeS}, \cite{MPrS}, \cite{CMS-JEMS}, \cite{MM-NA}).  For instance, in \cite{MS-AM}, \cite{MS-IUMJ} and \cite{MPeS}, \eqref{product-curvatures} was used to prove that stationary isothermic surfaces must be spherical, if $\Ga$ is compact, or planar or cylindrical, if $\Ga$ is not compact and satisfies certain global assumptions. In fact, it was shown that $\Si$ is time-invariant if and only if the heat content of any ball $B_r(x)\subset\Om$  satisfies a {\it balance law}, that is it does not depend on $x$ for $x\in\Si$ (but only on $r$ and $t$). By using this fact for $r=R$ and \eqref{product-curvatures}, one can easily infer that
\begin{equation}
\label{weingarten}
\Pi_\Ga=\mbox{constant on } \Ga
\end{equation}
--- that is, it turns out that $\Ga$ is a {\it Weingarten-type} surface. The above mentioned sufficient assumptions on $\Ga$ and \eqref{weingarten} then help to obtain the desired symmetry results. 
\par
When $p\not=2$, due to the lack of linearity, it is not known that, if the solution of \eqref{G-heat}-\eqref{boundary} admits a time-invariant level surface $\Si$, then \eqref{weingarten} holds --- nevertheless, we believe that it should hold true in all the cases $1<p\le\infty$.
For now, by using formulas \eqref{product-curvatures-p} and \eqref{p-mean-asymptotics}, we can only claim that, if for $1<q<\infty$ the $q$-mean $\mu_q^u(x,t)$ defined in \eqref{p-mean} does not depend on $x$ for $x\in\Si$, then \eqref{weingarten} holds and hence some symmetry results can be inferred (see Subsection \ref{sec:symmetry}).
\par
The proof of \eqref{varadhan-p} follows the tracks of Varadhan's argument and is presented in Section \ref{sec:first-parabolic}. In \cite{Va}, by taking advantage of the linearity of the heat equation, \eqref{varadhan} is obtained by proving the formula
\begin{equation*}
\label{elliptic}
\lim_{\ve\to0^+} \ve\,\log u^\ve(x)=-d_\Ga(x), \ x\in\ol{\Om},
\end{equation*}
for the modified Laplace transform of $u$,
$$
u^\ve(x)=\ve^{-2} \int_0^\infty u(x,t)\,e^{-t/\ve^2} dt, \ x\in\ol{\Om},
$$
and then by employing an inverse-Laplace-transform argument.
\par
When $p\not=2$, we bypass the lack of linearity in two steps. First, by the same Varadhan's argument, we prove \eqref{varadhan-p} when $\Om$
is a half-space and a ball --- here, we take advantage of the linearity $\De_G$ on one-dimensional and radially symmetric functions. Secondly, the validity of \eqref{varadhan-p} is obtained by employing as barriers the global solution of \eqref{G-heat} (see \cite{BG-IUMJ}), that generalizes the fundamental solution of case $p=2$, and the solution of \eqref{G-heat}-\eqref{boundary} in the ball. 
\par
The proofs of \eqref{product-curvatures-p} and \eqref{p-mean-asymptotics} are carried out in Section \ref{sec:second-parabolic} and are based on {\it new sharper} short-time asymptotic estimates. In fact, in Theorems \ref{th:pointwise} and \ref{th:uniform}, under the assumption that $\Ga$ is of class $C^2$, we show that
$$
4t \log u(x,t)+p'\,d_\Ga(x)^2=O(t\log t) \ \mbox{ as } \ t\to 0^+,
$$
uniformly for $x\in\ol{\Om}$.
We then construct lower and upper barriers for $u(x,t)$ that depend on $d_\Ga$ and $t$. Finally, for those barriers we obtain suitable versions of \eqref{product-curvatures-p} and \eqref{p-mean-asymptotics},  by applying a geometrical lemma (\cite[Lemma 2.1]{MS-PRSE}). To obtain \eqref{p-mean-asymptotics}, the monotonicity and continuity properties of the $q$-mean $\mu_q^u(x,t)$ with respect to $u$ (see \cite{IMW}) turn out to be very useful.

\section{First-order asymptotics}
\label{sec:first-parabolic}

In this section, we shall prove \eqref{varadhan-p}. We start with an asymptotic formula for
the solutions of a family of elliptic problems related to \eqref{G-heat}-\eqref{boundary}. 

\subsection{An auxiliary elliptic problem}
\label{sec:elliptic}

For $\ve>0$, we consider the following auxiliary elliptic problem:
\begin{eqnarray}
&u-\ve^2\De^G_pu=0 \ &\mbox{ in } \ \Om, \label{G-elliptic} \\
&u=1  \ &\mbox{ on } \ \Ga. \label{elliptic-boundary}
\end{eqnarray}
This problem is somewhat related to \eqref{G-heat}-\eqref{boundary} in the sense that it is easy to show that, when $p=2$, due to the linearity of the Laplace operator, the function defined by
\begin{equation}
\label{u-epsilon}
u^\ve(x)=\frac1{\ve^2}\int_{0}^{\infty}u(x,\tau)\,e^{-\tau/\ve^2}\, d\tau, \ \mbox{ for } \ x\in\ol{\Om},
\end{equation}
satisfies \eqref{G-elliptic}-\eqref{elliptic-boundary}, if $u=u(x,t)$ is a solution of \eqref{G-heat}-\eqref{boundary} (see \cite{MS-AM}). When $p\not=2$, $\De_p^G$ is no longer linear in general. Nevertheless, it is linear on radially symmetric functions and this fact will be useful in the sequel. We shall denote by $B_R$ the ball with radius $R$ centered at the origin.

\begin{lem}
\label{solution in the ball}
Set $1<p\le\infty$ and $\Om=B_R$. Then, the (viscosity) solution of \eqref{G-elliptic} and \eqref{elliptic-boundary} is given by
\begin{equation} 
\label{ball solution formula}
u^\ve(x)=\begin{cases}
\displaystyle 
\frac{\int_{0}^{\pi}e^{\sqrt{p'}\frac{|x|}{\ve}\cos \te}(\sin\te) ^{\al}\,d\te}{\int_{0}^{\pi}e^{\sqrt{p'}\frac{R}{\ve}\cos\te}(\sin\te) ^{\al}\,d\te} \ &\mbox{ if } \ 1<p<\infty,
\vspace{7pt} \\
\displaystyle
\frac{\cosh(|x|/\ve)}{\cosh(R/\ve)}  \ &\mbox{ if } \ p=\infty,
\end{cases} 
\end{equation}
for $x\in\ol{B}_R$;  here $\al=\frac{N-p}{p-1}$.
\end{lem}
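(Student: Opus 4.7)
The plan is to exploit the rotational symmetry of the problem to reduce the PDE to a linear second-order ODE, and then to recognise the integral in \eqref{ball solution formula} as the solution of that ODE that is regular at the origin. Since $\Om=B_R$ and the boundary datum is constant, the problem is invariant under the orthogonal group; by the comparison principle for the strictly elliptic operator $u\mapsto u-\ve^2\De^G_p u$, uniqueness of viscosity solutions forces $u^\ve(x)=v(|x|)$ for some $v\colon[0,R]\to\RR$ with $v(R)=1$ and (by smoothness at the origin) $v'(0)=0$.

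Next, I would write $\De^G_p$ explicitly on smooth radial profiles. A direct computation using $\na u=v'(r)\,x/r$ and $\lan\na^2 u\,\na u,\na u\ran/|\na u|^2=v''(r)$ gives
$$
\De^G_p u=\frac1p\Bigl[(p-1)\,v''(r)+\frac{N-1}{r}\,v'(r)\Bigr],\qquad r=|x|>0,
$$
so the PDE reduces to the linear ODE $(p-1)v''+\frac{N-1}{r}v'-\frac{p}{\ve^2}v=0$ on $(0,R)$. The rescaling $s=\sqrt{p'}\,r/\ve$, $V(s)=v(\ve s/\sqrt{p'})$, then transforms it into the modified-Bessel-type equation
$$
V''(s)+\frac{\al+1}{s}\,V'(s)-V(s)=0,\qquad \al+1=\frac{N-1}{p-1}.
$$
For $p=\infty$, the same radial identity yields $\De^G_\infty u=v''(r)$, so the ODE becomes $v-\ve^2 v''=0$ and the second branch of \eqref{ball solution formula} is the unique combination of $\cosh(r/\ve)$ and $\sinh(r/\ve)$ that is regular at $r=0$ and equals $1$ at $r=R$.

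For $1<p<\infty$, I would verify by direct calculation that $f(s)=\int_0^\pi e^{s\cos\te}(\sin\te)^\al\,d\te$ solves the reduced equation. Differentiating under the integral sign,
$$
f''(s)-f(s)=-\int_0^\pi e^{s\cos\te}(\sin\te)^{\al+2}\,d\te,
$$
and one integration by parts using $\frac{d}{d\te}e^{s\cos\te}=-s\sin\te\,e^{s\cos\te}$ rewrites the right-hand side as $-\frac{\al+1}{s}f'(s)$; the boundary terms vanish because $\al+1=(N-1)/(p-1)>0$ (ensured by $N\ge 2$). Hence $f$ satisfies the ODE, and since the linearly independent solution is singular at $s=0$, $V$ must be a scalar multiple of $f$; the normalisation $v(R)=1$ then yields the quotient in \eqref{ball solution formula}.

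The main obstacle I anticipate is justifying the equation at $x=0$ in the viscosity sense, where $\na u^\ve=0$ and $\De^G_p$ is not pointwise defined. This is handled by observing that $f$ is even (via $\te\mapsto\pi-\te$), so the candidate $u^\ve$ is a smooth function of $|x|^2$ and hence smooth across the origin; at that point its Hessian is a scalar multiple of the identity, so $\De^G_p$ applied to any admissible test function touching $u^\ve$ from above or below reduces to the value furnished by the radial calculation, and both viscosity inequalities hold as equalities.
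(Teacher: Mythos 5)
Your proposal is correct and follows essentially the same route as the paper: verify that the candidate is a classical solution away from its unique critical point $x=0$, argue the viscosity inequalities at $x=0$, and conclude by the comparison/uniqueness principle for \eqref{G-elliptic}--\eqref{elliptic-boundary}. The only real difference is at the origin: you exploit the special structure that $\na^2 u^\ve(0)$ is a scalar multiple of the identity, so that the relaxed operators $F^*$ and $F_*$ coincide there and both pointwise inequalities degenerate to equalities (via the analogue of Remark~\ref{viscosity consistency}); the paper instead cites a self-contained Extension Lemma (Lemma~\ref{regular viscosity solutions}) whose proof passes to the limit along points $y_n\to 0$ where $\na u^\ve\neq 0$ and only uses continuity of the extremal eigenvalues of $\na^2 u^\ve$, hence works without the scalar-Hessian structure. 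Your extra derivation of the ODE and its transformation into a modified-Bessel equation is correct (including the integration by parts justifying that $f$ solves it) but is not strictly needed once one is willing to verify the formula directly; one small phrasing caveat: when a test function $\fhi$ touches $u^\ve$ from above at $0$, its Hessian $\na^2\fhi(0)$ need not be a scalar multiple of the identity, so the viscosity inequality for $\fhi$ holds as an inequality (by degenerate ellipticity of $F^*$), not as an equality; the equality holds only for the pointwise condition involving $\na^2 u^\ve(0)$ itself.
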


\begin{proof}
We observe that the function $u^\ve$ is twice-differentiable in $B_R$ and $x=0$ is its unique critical point. By a direct computation,  $u^\ve$ satisfies \eqref{G-elliptic}, away from its critical point. Lemma \ref{regular viscosity solutions} ensures that $u^\ve$ is a (viscosity) solution of \eqref{G-elliptic} in $B_R$. It is evident that $u^\ve(x)$ also satisfies \eqref{elliptic-boundary}. Thus, by the uniqueness of viscosity solutions of problem \eqref{G-elliptic}-\eqref{elliptic-boundary} (see Lemma \ref{elliptic cp}), we get our claim.
\end{proof}

\begin{thm}[Asimptotics as $\ve\to 0^+$]
\label{ell conv ball}
Set $1<p\le\infty$, $\Om=B_R$, and let $u^\varepsilon$ be the solution of \eqref{G-elliptic}-\eqref{elliptic-boundary} . 
\par
Then, for every $x\in \ol{B}_R$, it holds that
\[
\lim_{\ve\to 0^+}\ve\log\left[ u^\ve(x)\right]=-\sqrt{p'}\,d_\Ga(x),
\]
and the limit is uniform on $\ol{B}_R$.
\end{thm}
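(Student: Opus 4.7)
The plan is to exploit the explicit representation furnished by Lemma \ref{solution in the ball} and reduce the claim to a Laplace-type asymptotic expansion of a one-dimensional integral. Since $\Om=B_R$, note that $d_\Ga(x)=R-|x|$ for every $x\in\ol B_R$, so the target identity becomes $\ve\log u^\ve(x)\to -\sqrt{p'}\,(R-|x|)$.

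First I would dispatch the case $p=\infty$, which is elementary: starting from $u^\ve(x)=\cosh(|x|/\ve)/\cosh(R/\ve)$ and using $\log\cosh(s/\ve)=s/\ve+\log\bigl(\tfrac12(1+e^{-2s/\ve})\bigr)$, we get
$$
\ve\log u^\ve(x)=|x|-R+O(\ve)=-d_\Ga(x)+O(\ve),
$$
uniformly in $x\in\ol B_R$, which matches the statement since $p'=1$ in that case.

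For $1<p<\infty$, set $I(r)=\int_0^\pi e^{r\cos\te}(\sin\te)^\al\,d\te$, so that $u^\ve(x)=I(\sqrt{p'}|x|/\ve)/I(\sqrt{p'}R/\ve)$. The key step is Laplace's method applied to $I(r)$ as $r\to\infty$: since the exponent $\cos\te$ attains its unique maximum at $\te=0$ with $\cos\te=1-\te^2/2+O(\te^4)$ and $\sin\te\sim\te$, the substitution $\te=u/\sqrt r$ yields
$$
I(r)=e^{r}\,r^{-\frac{\al+1}{2}}\,\Bigl[\,\int_0^\infty e^{-u^2/2}u^\al\,du+o(1)\Bigr]\quad\text{as }r\to\infty,
$$
hence $\log I(r)=r-\tfrac{\al+1}{2}\log r+O(1)$. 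Feeding this into the ratio, for $|x|$ bounded away from $0$,
$$
\ve\log u^\ve(x)=-\sqrt{p'}\,(R-|x|)+O\!\bigl(\ve\log(1/\ve)\bigr),
$$
which gives the pointwise limit and uniformity on every compact subset of $\ol B_R\setminus\{0\}$.

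The only delicate point is the uniformity near the critical point $x=0$, where the Laplace expansion of the numerator degenerates. Here I would use monotonicity: $I$ is increasing on $[0,\infty)$, so for $|x|\le\ve$ we have $I(0)\le I(\sqrt{p'}|x|/\ve)\le I(\sqrt{p'})$, and both bounds are $O(1)$. Therefore
$$
\ve\log u^\ve(x)=-\ve\log I(\sqrt{p'}R/\ve)+O(\ve)=-\sqrt{p'}R+O\!\bigl(\ve\log(1/\ve)\bigr),
$$
while at the same time $-\sqrt{p'}\,d_\Ga(x)=-\sqrt{p'}R+O(\ve)$ on that range. Patching the two regimes $|x|\ge\ve$ and $|x|\le\ve$ yields the uniform convergence on $\ol B_R$. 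The main technical obstacle, then, is not really the asymptotic analysis itself but rather arranging the error terms so that they are controlled uniformly across the crossover region around the origin; the monotonicity of $I$ and the fact that the limit function $-\sqrt{p'}\,d_\Ga$ is Lipschitz make this patching routine.
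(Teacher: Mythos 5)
Your proof is correct, but it takes a genuinely different route from the paper's. The paper factors out $e^{-\sqrt{p'}(R-|x|)/\ve}$ so that the ratio becomes $f_\ve(x)/f_\ve(x_R)$, where $f_\ve(x)$ is the $L^{1/\ve}$-norm of $\te\mapsto e^{-\sqrt{p'}|x|(1-\cos\te)}$ with respect to the normalized measure $(\sin\te)^\al\,d\te/\int_0^\pi(\sin\te)^\al\,d\te$. The standard fact that $L^q$-norms on a probability space increase monotonically to the $L^\infty$-norm gives $f_\ve\uparrow 1$ pointwise, and Dini's theorem on the compact $\ol B_R$ then delivers uniformity in one stroke, with no need to treat the origin separately and no asymptotic analysis of $I(r)$. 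Your argument instead runs Laplace's method for $I(r)=\int_0^\pi e^{r\cos\te}(\sin\te)^\al\,d\te$ as $r\to\infty$ and patches the near-origin regime $|x|\le\ve$ by hand. That is more computational, and the delicate step you flag — controlling the crossover where the Laplace expansion of the numerator degenerates — is exactly what the paper's monotonicity-plus-Dini device absorbs automatically. On the other hand, your approach yields the quantitative rate $O(\ve\log(1/\ve))$ that the soft compactness argument does not provide, and you dispose of $p=\infty$ explicitly where the paper only remarks that this case is simpler. One small point worth making explicit in your write-up: your expansion $\log I(r)=r-\tfrac{\al+1}{2}\log r+O(1)$ is uniform only for $r$ bounded away from $0$ (for $r\to 0^+$ the term $-\tfrac{\al+1}{2}\log r$ blows up while $\log I(r)$ stays bounded); your restriction to $|x|\ge\ve$, i.e.\ $r\ge\sqrt{p'}$, is what guarantees this, so the patching is not merely a convenience but a necessity of the method.
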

\begin{proof}
We present the proof of the more difficult case $1<p<\infty$. We observe that \eqref{ball solution formula} implies that
	\[
	u^\ve(x)\,e^{\sqrt{p'} \frac{R-|x|}{\ve}}=\frac{\int_{0}^{\pi}e^{-\sqrt{p'} \frac{|x|}{\ve} (1-\cos\te)}(\sin\te) ^{\al}\,d\te}{\int_{0}^{\pi} e^{-\sqrt{p'} \frac{R}{\ve} (1-\cos\te)}(\sin\te) ^{\al}\,d\te},
	\]
and hence
\begin{equation}
\label{ineq-log}
	\ve\,\log u^\ve(x)+\sqrt{p'}\,(R-|x|)=\log\left[\frac{\int_{0}^{\pi}e^{-\sqrt{p'} \frac{|x|}{\ve} (1-\cos\te)}(\sin\te) ^{\al}\,d\te}{\int_{0}^{\pi} e^{-\sqrt{p'} \frac{R}{\ve} (1-\cos\te)}(\sin\te) ^{\al}\,d\te}\right]^\ve.
\end{equation}
\par
Now,
each value of the function
$$
f_\ve(x)=\left\{\frac1{\int_{0}^{\pi}(\sin\te)^\al\,d\te}\int_{0}^{\pi}e^{-\sqrt{p'} \frac{|x|}{\ve},  (1-\cos\te)}(\sin\te) ^{\al}\,d\te\right\}^\ve
$$
can be viewed as the norm of the function (of $\te$) $e^{-\sqrt{p'} |x| (1-\cos\te)}$ in the Lebesgue space $L^\frac1{\ve}(0,\pi)$ equipped with the measure $(\sin\te)^\al d\te/\int_0^\pi (\sin\te)^\al d\te$, that is unitary. It is well known that such a norm increases as $\ve\to 0^+$ and tends to the function
$$
f_0(x)=\max\left\{e^{-\sqrt{p'} |x| (1-\cos\te)}: \te\in[0,\pi]\right\}=1, \ x\in\ol{\Om},
$$
as seen, for example, in \cite{LL}. 
\par
Therefore, from \eqref{ineq-log} we conclude that $\ve\log u^\ve+\sqrt{p'}\,d_\Ga$ converges pointwise to zero as $\ve\to 0^+$, since $d_\Ga(x)=R-|x|$. Moreover, the convergence is uniform on $\ol{B}_R$ owing to {\it Dini's monotone convergence theorem} (see \cite[Theorem 7.13]{Ru}). Indeed, $\ol{B}_R$ is compact, the sequence $\{ f_\ve\}_{\ve>0}$ is monotonic, and the functions $f_\ve$ and $f_0$ are continuous on  $\ol{B}_R$.
\end{proof}	

\subsection{The parabolic one-dimensional and radially symmetric cases}

Preliminarily, we focus our attention on the case of the half-space $H\subset\RR^N$ in which $x_1>0$. In this case the problem \eqref{G-heat}-\eqref{boundary} has a regular one-dimensional solution, as the following proposition states. In what follows, we will use the {\it complementary error function} defined by 
$$
\Erfc(\si)=\frac{2}{\sqrt{\pi}}\,\int_\si^\infty e^{-\tau^2} d\tau, \quad \si\in\RR.
$$

\begin{prop}
\label{Asymp hspace}
Set $\Om=H$; the solution of the problem \eqref{G-heat}-\eqref{boundary} is the function $\Psi$ defined by
$$
\Psi(x,t)=\sqrt{\frac{p'}{4\pi}}\int_{\frac{x_1}{\sqrt{t}}}^{\infty} e^{-\frac14 p' \si^2}\,d\si=
\Erfc\left(\frac{\sqrt{p'} x_1}{2\sqrt{t}}\right) \ \mbox{ for } \ (x,t)\in\ol{H}\times (0,\infty).
$$
\par
Moreover, it holds that 
$$
\lim_{t\to 0^+}4t\log \left[\Psi(x,t)\right]=-p'\,d_\Ga(x)^2, 
$$
uniformly for $x$ in every strip $\{ x\in\RR^N: 0\le x_1\le \de\}$ with $\de>0$.
\end{prop}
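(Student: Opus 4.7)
The plan is to exploit the fact that $\De_p^G$ acts linearly on functions of a single variable. For $\psi=\psi(x_1)$ with $\psi'\ne 0$, one has $|\na\psi|^2=(\psi')^2$ and $\langle\na^2\psi\,\na\psi,\na\psi\rangle=\psi''(\psi')^2$, so the definition collapses to
$$
\De_p^G\psi \,=\, \frac{1}{p}\bigl\{\psi''+(p-2)\psi''\bigr\}\,=\,\frac{\psi''}{p'}.
$$
Hence, looking for a solution of \eqref{G-heat}--\eqref{boundary} on $H$ depending only on $(x_1,t)$ amounts to solving the one-dimensional linear heat equation $\Psi_t=\Psi_{x_1x_1}/p'$ on $(0,\infty)\times(0,\infty)$ with zero initial datum and unit boundary value at $x_1=0$. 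The standard similarity solution of this problem is precisely $\Psi(x,t)=\Erfc(\sqrt{p'}\,x_1/(2\sqrt{t}))$, as a direct differentiation shows; the initial and boundary conditions follow from $\Erfc(+\infty)=0$ and $\Erfc(0)=1$. Since $\Psi_{x_1}<0$ on $H\times(0,\infty)$, $\Psi$ is smooth with no spatial critical points, so Lemma~\ref{regular viscosity solutions} ensures it is a viscosity solution of \eqref{G-heat}; uniqueness for the problem then identifies it as \emph{the} solution.

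For the asymptotic statement, set $\sigma=\sqrt{p'}\,x_1/(2\sqrt{t})$ and note that $d_\Ga(x)=x_1$ on $\ol{H}$, so that
$$
4t\log\Psi(x,t)+p'\,d_\Ga(x)^2\,=\,4t\,g(\sigma), \quad\mbox{where}\quad g(\sigma)\,:=\,\log\Erfc(\sigma)+\sigma^2.
$$
The function $g$ is continuous on $[0,\infty)$ with $g(0)=0$, and the classical expansion $\Erfc(\sigma)=(\sigma\sqrt{\pi})^{-1}e^{-\sigma^2}(1+O(\sigma^{-2}))$ as $\sigma\to+\infty$ gives $g(\sigma)=-\log(\sigma\sqrt{\pi})+O(\sigma^{-2})$. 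Together these yield a bound $|g(\sigma)|\le C(1+\log(1+\sigma))$ valid for all $\sigma\ge 0$. For $0\le x_1\le\de$ and $t\in(0,1]$, the parameter $\sigma$ lies in $[0,\sqrt{p'}\de/(2\sqrt{t})]$, and therefore
$$
\bigl|4t\,g(\sigma)\bigr|\,\le\,4Ct\Bigl(1+\tfrac{1}{2}|\log t|+\log(\sqrt{p'}\de/2)\Bigr)\,\longrightarrow 0 \quad\mbox{as } t\to 0^+,
$$
uniformly in $x_1\in[0,\de]$.

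The main subtlety is exactly this uniformity on the full strip, which must simultaneously handle the regime $x_1\lesssim\sqrt{t}$ (where $\sigma$ stays bounded and the asymptotic expansion of $\Erfc$ is useless, so one needs continuity and the fact $g(0)=0$) and the regime $x_1\sim\de$ (where $\sigma$ diverges and the logarithmic correction in $g(\sigma)$ has to be killed by the prefactor $4t$). Splitting the strip at a fixed threshold $\sigma_*$, and using continuity of $g$ below while invoking the quantitative asymptotic expansion above, handles both regimes and gives an error of order $t\log(1/t)$, which is the decisive point.
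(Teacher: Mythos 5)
Your proof is correct and follows essentially the same route as the paper: identify the one‑dimensional, self‑similar $\Erfc$ profile as the (viscosity) solution, and then reduce the uniform estimate on the strip $\{0\le x_1\le\de\}$ to a scalar bound that is $O(t\log(1/t))$. The only cosmetic difference is that you invoke the classical asymptotic expansion of $\Erfc$ to bound $g(\sigma)=\log\Erfc(\sigma)+\sigma^2$, whereas the paper obtains the same control by a direct change of variables in the defining integral (factoring out $e^{-p'x_1^2/4t}$ and estimating the residual integral by its values at $x_1=0$ and $x_1=\de$); the small point that Remark~\ref{viscosity consistency} (rather than Lemma~\ref{regular viscosity solutions}, which concerns an isolated critical point) is the cleaner justification that the smooth, critical‑point‑free $\Psi$ is a viscosity solution does not affect the argument.
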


\begin{proof}
By direct calculation, we see that $\Psi$ is a classical solution of \eqref{G-heat}; an inspection tells us that it also satisfies conditions \eqref{initial} and \eqref{boundary}. 
\par
By a change of variables, we get that
$$
\Psi(x,t)=e^{-\frac{p' x_1^2}{4t}}\,\sqrt{\frac{p'}{4\pi}}\int_0^\infty e^{-\frac12 p' \frac{x_1}{\sqrt{t}}\,\si-\frac14 p' \si^2}d\si,
$$
and hence
$$
4t \log\Psi(x,t)+p' x_1^2= 4t\,\log\left( \sqrt{\frac{p'}{4\pi}}\int_0^\infty e^{-\frac12 p' \frac{x_1}{\sqrt{t}}\,\si-\frac14 p' \si^2}d\si \right).
$$
\par
Thus, for $0\le x_1\le\de$, we have that
$$
4t\,\log\left( \sqrt{\frac{p'}{4\pi}}\int_0^\infty e^{-\frac12 p' \frac{\de}{\sqrt{t}}\,\si-\frac14 p' \si^2}d\si \right)\le 4t \log\Psi(x,t)+p' x_1^2\le 0,
$$ 
and this implies the desired uniform convergence.
\end{proof}

To determine the short-time asymptotic behavior of the solution of \eqref{G-heat}-\eqref{boundary} in a ball, the following lemma, that holds for a general domain, will be useful.

\begin{lem}
\label{increasing in time}
Let $\Om$ be a domain in $\RR^N$ and $u(x,t)$ be the solution of problem \eqref{G-heat}--\eqref{boundary}. 
Then, for each $x\in \ol{\Om}$, the function
$
(0,\infty)\ni t\mapsto u(x,t)
$
is increasing.
\end{lem}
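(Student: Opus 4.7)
The plan is to deduce the monotonicity from the comparison principle by exploiting the time-translation invariance of the equation, without any differentiation of $u$ in time (which would be delicate for viscosity solutions).

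First, I would fix an arbitrary $s>0$ and define
$$
v(x,t)=u(x,t+s) \ \mbox{ for } \ (x,t)\in\ol{\Om}\times[0,\infty).
$$
Because the equation \eqref{G-heat} is autonomous, $v$ is a viscosity solution of $v_t=\De_p^G v$ on $\Om\times(0,\infty)$, and clearly $v=1$ on $\Ga\times(0,\infty)$, so $v$ and $u$ share the same lateral boundary data.

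Next I would observe that the constant $0$ is a (viscosity) subsolution of \eqref{G-heat}, it equals $u$ on $\Om\times\{0\}$, and it is bounded above by $u$ on $\Ga\times(0,\infty)$; the comparison principle available for this problem (the same one that yields uniqueness of viscosity solutions of \eqref{G-heat}-\eqref{boundary} and that will be invoked throughout the paper; cf.\ the references to \cite{BG-CPAA} and \cite{MPR-SJMA} cited just after \eqref{boundary}) then gives $u\ge 0$ on $\ol{\Om}\times[0,\infty)$. Consequently,
$$
v(x,0)=u(x,s)\ge 0=u(x,0) \ \mbox{ for every } \ x\in\ol{\Om},
$$
so on the whole parabolic boundary of $\Om\times(0,\infty)$ we have $v\ge u$.

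At this point, applying the same comparison principle to $v$ and $u$ on $\Om\times(0,\infty)$ yields $v\ge u$ everywhere, i.e.
$$
u(x,t+s)\ge u(x,t) \ \mbox{ for all } \ x\in\ol{\Om}, \ t\ge 0.
$$
Since $s>0$ was arbitrary, the function $t\mapsto u(x,t)$ is nondecreasing on $(0,\infty)$ for every $x\in\ol{\Om}$. Strict monotonicity (if needed) follows from the strong maximum principle applied to $v-u$, noting that $v-u$ cannot vanish at an interior point without being identically zero, which is ruled out at points of $\Ga$ where $u(x,t)<1=u(x,t+s)$ is already impossible so one instead argues at interior points away from $\Ga$.

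The only nontrivial ingredient is the parabolic comparison principle for viscosity sub/supersolutions of \eqref{G-heat} on (possibly unbounded) domains with continuous boundary data; this is exactly the tool already needed to make sense of problem \eqref{G-heat}-\eqref{boundary}, so I expect it to be cited rather than reproved. No other obstacle is anticipated.
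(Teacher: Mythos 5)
Your argument is correct and is essentially the paper's own: fix a time shift $\tau>0$, set $v(x,t)=u(x,t+\tau)$, use time-translation invariance of \eqref{G-heat} to see that $v$ is a solution with the same lateral data, observe $v(\cdot,0)=u(\cdot,\tau)\ge 0=u(\cdot,0)$ by the (weak or strong) maximum principle, and conclude $v\ge u$ by the comparison principle (Theorem~\ref{parabolic cp}). The closing aside about strict monotonicity via the strong maximum principle applied to $v-u$ is not needed for what the lemma is used for (only nondecreasingness is invoked later), and as sketched it is not a clean argument since $v-u$ need not itself solve \eqref{G-heat}; the paper does not attempt this.
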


\begin{proof}
Fix $\tau>0$ and define the function $v=v(x,t)$ by
\[
v(x,t)=u(x,t+\tau), \ (x,t)\in \Om\times (0,\infty).
\]
Since \eqref{G-heat} is invariant under translations in time, one can easily check that $v$ satisfies \eqref{G-heat} and \eqref{boundary}, and is such that $v(x,0)=u(x,\tau)$, that is positive by the maximum principle for $u$.
Thus, the comparison principle (see Theorem \ref{parabolic cp})  gives that
$v\geq u$ on $\ol\Om\times(0,\infty)$, which implies the desired monotonicity for $u$.
\end{proof}

Next, we need to be sure that the viscosity solution of \eqref{G-heat}-\eqref{boundary} in a ball $B_R$ can be transformed into the viscosity solution of \eqref{G-elliptic}-\eqref{elliptic-boundary}.

\begin{lem}[Solution in a ball]
\label{radial laplace transform}
Set $1<p\le\infty$ and let $\Om$ be the ball $B_R$ centered at the origin with radius $R$; the function $u^\ve$, defined by \eqref{ball solution formula}, results from \eqref{u-epsilon}, where $u(x,t)$ is the (viscosity) solution of \eqref{G-heat}-\eqref{boundary}.
\end{lem}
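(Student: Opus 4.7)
The plan is to prove that the function
$$
w^\ve(x):=\frac{1}{\ve^2}\int_0^\infty u(x,\tau)\,e^{-\tau/\ve^2}\,d\tau,\qquad x\in\ol{B}_R,
$$
is itself a viscosity solution of the elliptic problem \eqref{G-elliptic}-\eqref{elliptic-boundary}. Once this is established, the uniqueness statement in Lemma \ref{elliptic cp} identifies $w^\ve$ with the explicit formula \eqref{ball solution formula} furnished by Lemma \ref{solution in the ball}, which is exactly the desired conclusion.

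The first key observation is that, by the rotational symmetry of both \eqref{G-heat} and the data \eqref{initial}-\eqref{boundary} together with uniqueness of viscosity solutions (parabolic comparison, Theorem \ref{parabolic cp}), one has $u(x,t)=U(|x|,t)$. This is decisive because $\De_p^G$ acts \emph{linearly} on radial functions: a direct computation gives, with $\al=(N-p)/(p-1)$,
$$
\De_p^G u=\frac{1}{p'}\Bigl(U_{rr}+\frac{\al+1}{r}\,U_r\Bigr),
$$
so, away from $r=0$, the parabolic equation reduces to a linear (degenerate) equation for $U(r,\tau)$. Classical linear parabolic theory then supplies the smoothness of $U$ on $(0,R]\times(0,\infty)$ together with the $\tau$-uniform bounds on $r$-derivatives on compact subsets of $(0,R]$ that are needed in the next step.

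The second step is to differentiate under the integral sign in the defining formula of $w^\ve$ --- justified by the exponential weight and the above uniform bounds --- and apply once more the radial expression of $\De_p^G$ to obtain
$$
\ve^2\De_p^G w^\ve(x)=\int_0^\infty (\De_p^G u)(x,\tau)\,e^{-\tau/\ve^2}\,d\tau=\int_0^\infty u_\tau(x,\tau)\,e^{-\tau/\ve^2}\,d\tau
$$
for $x\in B_R\setminus\{0\}$. An integration by parts in $\tau$, in which the boundary terms vanish at $\tau=0$ by \eqref{initial} and at $\tau=\infty$ because $0\le u\le 1$, turns the right-hand side into $w^\ve(x)$, so that $w^\ve-\ve^2\De_p^G w^\ve=0$ classically on $B_R\setminus\{0\}$. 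The boundary condition $w^\ve\equiv 1$ on $\Ga$ is immediate from $u(\cdot,\tau)\equiv 1$ on $\Ga$ for every $\tau>0$, and continuity on $\ol{B}_R$ follows by dominated convergence. Since $w^\ve$ is radial, continuous on $\ol{B}_R$ and classically solves \eqref{G-elliptic} away from critical points (the origin being one by radiality, and monotonicity of $U(\cdot,\tau)$ in $r$ coming from the maximum principle), Lemma \ref{regular viscosity solutions} --- invoked exactly as in the proof of Lemma \ref{solution in the ball} --- promotes $w^\ve$ to a viscosity solution of \eqref{G-elliptic}-\eqref{elliptic-boundary}, closing the argument.

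The main obstacle I anticipate is the interchange of $\De_p^G$ with the integral in $\tau$: since $\De_p^G$ is nonlinear, this is \emph{not} legitimate in general, and it is bypassed here only because radial symmetry linearizes the operator in the radial variable. The supporting technicalities --- differentiation under the integral sign and the integration by parts --- are routine once the monotonicity of $u$ in $\tau$ (Lemma \ref{increasing in time}), the a priori bound $0\le u\le 1$, and the exponential weight are combined.
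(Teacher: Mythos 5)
Your approach is genuinely different from the paper's. The paper exploits the linearity of $\De_p^G$ on radial functions in a directly computational way: it invokes the explicit Fourier--Bessel (resp.\ Fourier--cosine for $p=\infty$) series expansion of $u$ from \cite{KKK}, applies the transform \eqref{u-epsilon} term by term using Bessel integral formulas from \cite{AS}, and checks that the result is precisely \eqref{ball solution formula}. You avoid the explicit series altogether: you argue that the transform $w^\ve$ is itself a viscosity solution of \eqref{G-elliptic}--\eqref{elliptic-boundary}, and then identify it with $u^\ve$ by the uniqueness in Lemma \ref{elliptic cp} combined with Lemma \ref{solution in the ball}. Both routes turn on the same key observation --- that the radial equation is linear, so $\De_p^G$ can be traded against the $\tau$-integral --- but yours is conceptually cleaner and shows exactly where linearity is used, at the cost of requiring more regularity.

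That is where there is a gap. To invoke Lemma \ref{regular viscosity solutions} you need $w^\ve\in C^2(B_R)$ \emph{including} twice-differentiability at the origin, because the proof of that lemma passes to the limit $y_n\to 0$ via the continuity of the eigenvalues of $\na^2 w^\ve$ at $0$. Your regularity claim ("classical linear parabolic theory then supplies the smoothness of $U$ on $(0,R]\times(0,\infty)$") explicitly stops short of $r=0$, where the radial equation has the singular coefficient $(\al+1)/r$, and the a priori viscosity regularity for $u$ cited in the paper ($C^{1+\tau}_{loc}$ from \cite{AP}) is not $C^2$. To close the gap, one can argue as follows: having established that $w^\ve$ is a bounded radial solution of the linear ODE $W''+\frac{\al+1}{r}W'=\frac{p'}{\ve^2}W$ on $(0,R)$ with $W(R)=1$, note that $r=0$ is a regular singular point with indicial exponents $0$ and $-\al$, so the continuous-at-$0$ branch is analytic at $r=0$, giving the required $C^2$-extension to all of $B_R$. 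The same analysis also furnishes the strict monotonicity you need for uniqueness of the critical point (from $(r^{\al+1}W')'=\frac{p'}{\ve^2}\,r^{\al+1}W>0$ once $W>0$), whereas "monotonicity from the maximum principle" gives only a non-strict statement. The paper's explicit series sidesteps all of this, since the closed-form expression in \eqref{ball solution formula} is manifestly smooth on $B_R$.
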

\begin{proof}
Owing to the linearity of \eqref{G-heat}, the unique solution of $u(x,t)$ can be explicitly computed as a series expansion (see \cite{KKK}).
\par
In fact, if $1< p <\infty$, we have that
$$
u(x,t)=1-2\,\sum_{n=1}^{\infty}\frac{e^{-\frac{\ga_n^2}{p' R^2}\, t}}{\ga_nJ_{\be+1}(\ga_n)}\left(\frac{R}{|x|}\right)^{\be}J_\be\left(\frac{\ga_n}{R}|x|\right),
$$
where 
$J_\be$ is the Bessel function of the first kind of order $\be=(N-p)/(2p-2)$ and $\ga_n$ are the positive zeros of $J_\be$;  if $p=\infty$, we get instead:
$$
u(x,t)=1-\frac4{\pi}\sum_{n=1}^{\infty}\frac{(-1)^{n-1}}{2n-1}e^{-\frac{(2n-1)^2 \pi^2}{4R^2}\,t}\cos\left( (2n-1)\frac{\pi |x|}{2R}\right).
$$ 
\par
Tedious but straightforward calculations, based on formulas from \cite{AS}, show that $u(x,t)$
is transformed, by means of \eqref{u-epsilon}, into the function $u^\ve$, defined by \eqref{ball solution formula}.
\end{proof}

\begin{thm}[Short-time asymptotics in a ball]
\label{Asymptotic in the ball}
Set $1<p\le\infty$, $\Om=B_R$, and let $u(x,t)$ be the viscosity solution of \eqref{G-heat}-\eqref{boundary}. 
\par
Then, it holds that
\begin{equation}
\label{limit}
\lim_{t\to 0^+}4t\log u(x,t) =-p'\,d_\Ga(x)^2 \ \mbox{ for } \ x\in\ol\Om.
\end{equation}
\end{thm}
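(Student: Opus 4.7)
The plan is to establish the claimed limit via matching upper and lower bounds on $4t\log u(x,t)$ as $t\to 0^+$. If $x\in\Ga$ then $d_\Ga(x)=0$ and $u(x,t)\equiv 1$, so both sides vanish; I assume $d:=d_\Ga(x)>0$ from now on.

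For the upper bound I will run Varadhan's inverse-Laplace-transform argument. Lemma \ref{radial laplace transform} identifies $u^\ve(x)$ with $\ve^{-2}\int_0^\infty u(x,\tau)\,e^{-\tau/\ve^2}\,d\tau$, and the monotonicity of $\tau\mapsto u(x,\tau)$ (Lemma \ref{increasing in time}) delivers the elementary estimate
\[
u^\ve(x)\ge u(x,t)\,\ve^{-2}\int_t^\infty e^{-\tau/\ve^2}\,d\tau = u(x,t)\,e^{-t/\ve^2},
\]
so that $4t\log u(x,t)\le 4t\log u^\ve(x)+4t^2/\ve^2$. Fixing $\delta\in(0,\sqrt{p'}\,d)$ and invoking Theorem \ref{ell conv ball} to write $\log u^\ve(x)\le(-\sqrt{p'}\,d+\delta)/\ve$ for $\ve$ small, I will choose $\ve=2t/(\sqrt{p'}\,d-\delta)$ and read off $4t\log u(x,t)\le-(\sqrt{p'}\,d-\delta)^2$. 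Letting $\delta\to 0^+$ yields $\limsup_{t\to 0^+}4t\log u(x,t)\le -p'\,d^2$.

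For the lower bound I will compare $u$ with the half-space solution of Proposition \ref{Asymp hspace}. Let $y_0=Rx/|x|\in\Ga$ be the closest boundary point to $x$, set $\nu=y_0/R$, and define the half-space $H=\{z\in\RR^N:\langle z-y_0,\nu\rangle<0\}$; then $\ol B_R\subset H$. After a rigid motion sending $\pa H$ to $\{x_1=0\}$, Proposition \ref{Asymp hspace} realizes the solution of \eqref{G-heat}--\eqref{boundary} on $H$ as
\[
\Psi_0(z,t)=\Erfc\!\left(\frac{\sqrt{p'}\,\dist(z,\pa H)}{2\sqrt{t}}\right).
\]
Since $\De_p^G$ acts linearly on functions depending on a single spatial coordinate (see the remarks after \eqref{G-laplace}), $\Psi_0$ remains a solution of \eqref{G-heat} on the smaller cylinder $B_R\times(0,\infty)$, has zero initial trace, and satisfies $\Psi_0\le\Erfc(0)=1=u$ on $\pa B_R\times(0,\infty)$. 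The parabolic comparison principle (Theorem \ref{parabolic cp}) therefore yields $\Psi_0\le u$ on $\ol B_R\times[0,\infty)$. Evaluating at $x$, where $\dist(x,\pa H)=R-|x|=d$, this reads
\[
u(x,t)\ge\Erfc\!\left(\frac{\sqrt{p'}\,d}{2\sqrt{t}}\right),
\]
and the asymptotic part of Proposition \ref{Asymp hspace} forces $\liminf_{t\to 0^+}4t\log u(x,t)\ge -p'\,d^2$.

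The delicate step will be the lower bound. A purely Laplace-transform argument based on the trivial estimate $u(x,T)\ge u^\ve(x)-e^{-T/\ve^2}$ fails to recover the sharp constant $p'\,d^2$: at the scaling dictated by Theorem \ref{ell conv ball}, the truncation error $e^{-T/\ve^2}$ has the same exponential order as $u^\ve(x)$ and swamps the main term. The tangent-half-space comparison bypasses this obstruction by supplying an explicit subsolution whose short-time decay is already sharp.
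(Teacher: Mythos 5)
Your proposal is correct and matches the paper's proof essentially step for step: the lower bound via comparison with the tangent half-space solution $\Psi$ (Proposition \ref{Asymp hspace} and Theorem \ref{parabolic cp}), and the upper bound via the monotonicity-in-time inequality $u(x,t)\le u^\ve(x)e^{t/\ve^2}$ combined with the elliptic asymptotics of Theorem \ref{ell conv ball} and an optimal choice $\ve\sim t$. The paper parametrizes $\ve=\la t$, takes $\limsup$, and then optimizes over $\la$, whereas you fold the optimization into a fixed $\delta$-parameter, but the two bookkeeping schemes give identical bounds (and the paper is slightly more careful about the non-uniqueness of the nearest boundary point when $x=0$).
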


\begin{proof}
Given $x\in\Om$ there exists $y\in\Ga$ such that $|x-y|=d_\Ga(x)$ ($y$ is unique unless $x=0$). Let $H$ be the half-space containing $\Om$ and such that $\pa H\cap\Ga=\{ y\}$; notice that $d_\Ga(x)=d_{\pa H}(x)$.
\par 
Let $\Psi^y$ be the solution of \eqref{G-heat}-\eqref{boundary} in $H\times(0,\infty)$; since $\Om$ is contained in $H$, $\Psi^y$ obviously satisfies \eqref{G-heat} and \eqref{initial} for $\Om$ and, also, $\Psi^y\le 1$ on $\Ga\times(0,\infty)$.
By comparison (Theorem \ref{parabolic cp}), we get that $u\ge\Psi^y$ and hence
\begin{equation}
\label{half-space inequality}
4t\log u(x,t)\geq 4t\log\Psi^y(x,t) \ \mbox{ for } \ (x,t)\in \ol\Om\times(0,\infty).
\end{equation}
Thus, Theorem \ref{Asymp hspace} implies that
\begin{equation}
\label{liminf ball}
\liminf_{t\to 0^+} 4t\log u(x,t)\ge -p'\,d_{\pa H}(x)^2=
-p'\,d_\Ga(x)^2.
\end{equation}
The last limit is uniform on $\ol\Om$, as Proposition \ref{Asymp hspace} (with the choice $\de=2R$) informs us.
\par
Now, by  Lemma \ref{radial laplace transform}, for every $\ve>0$ the function
$u^\ve$ defined in \eqref{u-epsilon} is the solution of \eqref{G-elliptic}-\eqref{elliptic-boundary} in $\Om$. Thus, by Lemma \ref{increasing in time}, we have that
\[
\ve^2 u(x,t)\,e^{-t/\ve^2}\le \int_t^\infty u(x,\tau)\,e^{-\tau/\ve^2}\,d\tau \leq \int_{0}^{\infty}u(x,\tau)e^{-\tau/\ve^2}\,d\tau = \ve^2 u^\ve(x),
\]
and hence
$$
u(x,t)\le u^\ve(x)\,e^{t/\ve^2};
$$
the last inequality holds for any $t, \ve>0$. Next, we choose
$\ve=\la\, t$ and obtain that
$$
u(x,t)\le u^{\la t}(x)\,e^{1/\la^{2} t} \ \mbox{ for any } \ t>0.
$$
Therefore, 
\begin{equation}
\label{lambda-inequality}
4t\,\log u(x,t)\le \frac4{\la}\,(\la\,t)\,\log u^{\la t}(x)+\frac4{\la^2},
\end{equation}
and hence
$$
\limsup_{t\to 0^+}4t\,\log u(x,t)\le -\frac{4 \sqrt{p'} d_\Ga(x)}{\la}+\frac4{\la^2}.
$$
If we choose $\la^*>0$ such that 
$$
-\frac{4 \sqrt{p'} d_\Ga(x)}{\la^*}+\frac4{(\la^*)^2}=-p'\,d_\Ga(x)^2, \ \mbox{ that is } \ \la^*=\frac2{\sqrt{p'}\, d_\Ga(x)},
$$
we obtain:
$$
\limsup_{t\to 0^+}4t\,\log u(x,t)\le -p'\,d_\Ga(x)^2.
$$
This inequality and \eqref{liminf ball} imply that \eqref{limit} holds pointwise in $\Om$. Since, for $x\in\Ga$, \eqref{limit} is trivial, this also holds pointwise in $\ol\Om$.
\end{proof}

\subsection{Constructing barriers for problem \eqref{G-heat}-\eqref{boundary}}

In this subsection, we shall prove our first-order asymptotics for quite general (not necessarily bounded) domains. A useful tool will be the function defined by 
\begin{equation}
\label{global solution}
\Phi(x,t)=t^{-\frac{N+p-2}{2(p-1)}}e^{-p'\frac{|x|^2}{4t}}, \quad (x,t)\in \RR^N\times(0,\infty),
\end{equation}
which is a solution of \eqref{G-heat} that generalizes to the case $p\not=2$ the fundamental solution of the heat equation, as shown in \cite[Proposition 4.1]{BG-IUMJ}. 

The next two lemmas shall give two global barriers for the solution of \eqref{G-heat}-\eqref{boundary}.

\begin{lem}[A barrier from below]
\label{below-barriers}
Let $z\in\RR^N\setminus\ol\Om$ and $U^z$ be the function defined by
\begin{equation}
\label{below-barriers formula}
U^z(x,t)=A_{N,p}\,d_\Ga(z)^\frac{N+p-2}{p-1}\Phi(x-z,t) \ \mbox{ for } \ (x,t)\in\ol{\Om}\times(0,\infty),
\end{equation}
with
$$
A_{N,p}=\left[\frac{p\, e}{2(N+p-2)}\right]^\frac{N+p-2}{2(p-1)}.
$$
\par
Assume that $u(x,t)$ is the bounded (viscosity) solution of \eqref{G-heat}-\eqref{boundary}, then we have that
$$
U^z\leq u\ \mbox{ on }\ \ol\Om\times(0,\infty).
$$
\end{lem}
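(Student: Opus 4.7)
The plan is to apply the parabolic comparison principle (Theorem \ref{parabolic cp}, referenced later in the paper) to $U^z$ and $u$, after verifying that $U^z$ is a (sub)solution of \eqref{G-heat} and dominates neither the initial datum nor the boundary datum of $u$. Since the operator $\partial_t - \Delta_p^G$ is translation-invariant in space and $\Delta_p^G$ is $1$-homogeneous, the fact that $\Phi$ solves \eqref{G-heat} (from \cite[Proposition 4.1]{BG-IUMJ}) immediately gives that $U^z(x,t)=A_{N,p}\,d_\Ga(z)^{\frac{N+p-2}{p-1}}\,\Phi(x-z,t)$ is a classical solution of \eqref{G-heat} on $\bar\Om\times(0,\infty)$. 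Since $z\notin\ol\Om$, one has $|x-z|\ge d_\Ga(z)>0$ for every $x\in\ol\Om$, so $U^z$ is bounded on $\ol\Om\times(0,\infty)$ and tends to $0$ as $t\to 0^+$ for each such $x$; thus $U^z(x,0)=0=u(x,0)$ in the classical sense.

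The crux of the argument, and the step where the specific value of the constant $A_{N,p}$ matters, is checking that
\[
U^z(x,t)\le 1\ \mbox{ for all }\ (x,t)\in\Ga\times(0,\infty).
\]
Setting $r=d_\Ga(z)$ and $\alpha=\frac{N+p-2}{2(p-1)}$, and using $|x-z|\ge r$ for $x\in\Ga$, one bounds
\[
U^z(x,t)\le A_{N,p}\,r^{2\alpha}\,t^{-\alpha}\,e^{-p'\,r^{2}/(4t)}.
\]
After the substitution $s=r^{2}/(4t)$, the right-hand side becomes $A_{N,p}\,(4s)^{\alpha}\,e^{-p's}$; maximizing the elementary function $s^{\alpha}e^{-p's}$ over $s>0$ at $s=\alpha/p'$ shows that
\[
\max_{s>0}(4s)^{\alpha}\,e^{-p's}=\Bigl(\frac{4\alpha}{p'\,e}\Bigr)^{\alpha}.
\]
A direct simplification, using $p'=p/(p-1)$ and $4\alpha=2(N+p-2)/(p-1)$, gives $4\alpha/(p'e)=2(N+p-2)/(pe)$, so $A_{N,p}=\left(\frac{pe}{2(N+p-2)}\right)^{\alpha}$ is precisely the reciprocal of this maximum. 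Hence the desired bound holds with equality at the optimal $s$, and $U^z\le 1$ on $\Ga\times(0,\infty)$.

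With $U^z$ a bounded viscosity solution of \eqref{G-heat} satisfying $U^z\le u$ on the parabolic boundary $(\Om\times\{0\})\cup(\Ga\times(0,\infty))$, the comparison principle yields $U^z\le u$ on $\ol\Om\times(0,\infty)$. The main (and only nontrivial) obstacle is the sharp algebraic identification of $A_{N,p}$ as the reciprocal of $\max_{s>0}(4s)^{\alpha}e^{-p's}$; everything else follows from the translation invariance and $1$-homogeneity of $\Delta_p^G$, the known fact that $\Phi$ solves \eqref{G-heat}, and the parabolic comparison principle.
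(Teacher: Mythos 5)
Your proof is correct and follows essentially the same route as the paper: normalize $\Phi(\cdot-z,\cdot)$ by the reciprocal of its maximum on $\Ga\times(0,\infty)$ so that it lies below $u$ on the parabolic boundary, then invoke the comparison principle of Theorem~\ref{parabolic cp}. The only difference is that you carry out explicitly the elementary maximization of $t^{-\alpha}e^{-p'r^2/(4t)}$ that the paper compresses into ``we directly compute,'' and your algebra identifying $A_{N,p}$ is right.
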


\begin{proof}
If we consider 
$$
A(z)=\max\{\Phi(x-z,t):(x,t)\in\Ga\times(0,\infty)\},
$$
then the function $U(x,t)=A(z)^{-1}\Phi(x-z,t)$ satisfies \eqref{G-heat}, \eqref{initial} and is such that
$U\le 1=u$ on $\Ga\times(0,\infty)$.
Notice that, the function $U^z$ is decreasing with respect to $|x-z|$, then it is bounded by $1$ in the whole $\Om\times(0,\infty)$. Therefore, by applying Theorem \ref{parabolic cp} we have that
$
u\ge U\ \mbox{ on }\ \ol\Om\times(0,\infty).
$
\par
Since we directly compute that
$$
A(z)^{-1}=A_{N,p}\,d_\Ga(z)^\frac{N+p-2}{p-1},
$$
we get our claim.
\end{proof}

\begin{lem}[A barrier from above]
\label{above-barrier}
Let $u$ be the bounded (viscosity) solution of \eqref{G-heat}-\eqref{boundary} and $V=V(x,t)$ be the function defined by
\begin{equation}
\label{above-barrier formula}
V(x,t)=u_B(0,t/d_\Ga(x)^2)\ \mbox{ for }\ (x,t)\in\ol{\Om}\times(0,\infty),
\end{equation}
where $u_B$ is the solution of  \eqref{G-heat}-\eqref{boundary} in the unit ball $B$ and we mean that  $u_B(0,t/d_\Ga(x)^2)=1$ when $x\in\Ga$.
\par  
Then, it holds that
$
u\leq V 
$
on $\ol\Om\times(0,\infty)$.
\end{lem}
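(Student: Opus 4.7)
The plan is to realize $V(x_0,t)$, for a given $x_0\in\Omega$, as the value at the center of the solution of \eqref{G-heat}--\eqref{boundary} on the largest ball centered at $x_0$ that fits inside $\Omega$, and then to sandwich $u$ by this local solution via the parabolic comparison principle (Theorem \ref{parabolic cp}). The case $x\in\Ga$ is trivial by the stated convention, so I focus on $x_0\in\Om$.

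\textbf{Step 1: a rescaled radial solution.} Fix $x_0\in\Om$ and set $R=d_\Ga(x_0)>0$, so that $B_R(x_0)\subset\Om$. Using that $\De_p^G$ is $1$-homogeneous and scales under $v(y)=u(x_0+Ry)$ as $\De_p^G v(y)=R^2\,\De_p^G u(x_0+Ry)$, define
\[
w(x,t)=u_B\!\left(\frac{x-x_0}{R},\frac{t}{R^2}\right),\qquad (x,t)\in\ol{B_R(x_0)}\times(0,\infty).
\]
A direct check shows $w_t-\De_p^G w=0$ in $B_R(x_0)\times(0,\infty)$, $w(\cdot,0)\equiv 0$ in $B_R(x_0)$, and $w\equiv 1$ on $\pa B_R(x_0)\times(0,\infty)$; hence $w$ is the (viscosity) solution of \eqref{G-heat}--\eqref{boundary} on $B_R(x_0)\times(0,\infty)$, and
\[
w(x_0,t)=u_B\!\left(0,\,t/d_\Ga(x_0)^2\right)=V(x_0,t).
\]

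\textbf{Step 2: comparison on $B_R(x_0)\times(0,\infty)$.} By the maximum principle (which also gives $0\le u\le 1$ on $\ol\Om\times[0,\infty)$), we have $u\le 1=w$ on $\pa B_R(x_0)\times(0,\infty)$, while $u(\cdot,0)=0=w(\cdot,0)$ on $B_R(x_0)$. Applying Theorem~\ref{parabolic cp} on $B_R(x_0)\times(0,T)$ for arbitrary $T>0$ yields $u\le w$ on $\ol{B_R(x_0)}\times(0,\infty)$. Evaluating at $x=x_0$ gives $u(x_0,t)\le w(x_0,t)=V(x_0,t)$, as desired.

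\textbf{Anticipated difficulty.} The only nontrivial point is justifying Step~1 in the viscosity sense, i.e.\ that the affine change of variables $(x,t)\mapsto((x-x_0)/R,\,t/R^2)$ preserves the class of viscosity solutions of $u_t-\De_p^G u=0$; this is standard because $\De_p^G$ is invariant under such parabolic rescalings (any test function touching $w$ from above/below at $(x_0,t_0)$ corresponds, via the same change of variables, to a test function touching $u_B$ at the rescaled point, and the equation scales by the common factor $R^{-2}$). Once this is in hand, Step~2 is just the already-cited comparison principle for bounded viscosity sub/supersolutions.
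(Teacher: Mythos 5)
Your proposal is correct and takes essentially the same approach as the paper: the paper compares $u$ with the solution $v^{x}$ of \eqref{G-heat}--\eqref{boundary} on the inscribed ball $B_{d_\Ga(x)}(x)$ and then identifies $v^{x}$ with a translate/rescale of $u_B$, whereas you rescale $u_B$ first to produce the local solution $w$ and then compare; the two arguments are the same up to the order of these two steps. Your remark on the scaling invariance of $\De_p^G$ in the viscosity sense makes explicit a point the paper states in one line.
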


\begin{proof}
For $x\in\Ga$, the inequality is satisfied as an equality, by definition. Let $x\in\Om$ and let $v^x=v^x(y,t)$ be the solution of \eqref{G-heat}-\eqref{boundary} in $B^x\times(0,\infty)$, where $B^x$ is the ball centered at $x$ with radius $d_\Ga(x)$. The maximum principle and Theorem \ref{parabolic cp} give that
$$
u(y,t)\leq v^x(y,t)\ \mbox{ for every } \ (y,t)\in\ol{B^x}\times(0,\infty),
$$ 
and hence, in particular, $u(x,t)\leq v^x(x,t)$ for every $t>0$. Since $x$ is arbitrary in $\Om$, we infer that
\begin{equation}
\label{inequal}
u(x,t)\le v^x(x,t)\ \mbox{ for }\ (x,t)\in\Om\times(0,\infty).
\end{equation}
\par
Now, for fixed $x\in \Om$, consider the function defined by 
$$
w(y,t)=v^x(x+d_\Ga(x)\,y,d_\Ga(x)^2 t)\ \mbox{ for }\ (y,t)\in \ol{B}\times(0,\infty);
$$
since \eqref{G-heat} is translation and scaling invariant, we have that $w$ satisfies the problem \eqref{G-heat}-\eqref{boundary} in $B$, and hence equals $u_B$ on $\ol{B}\times(0,\infty)$. 
\par
Therefore, evaluating $u_B$ for $(0, t/d_\Ga(x)^2) $ gives that
$$
u_B(0,t/d_\Ga(x)^2)=w(0,t/d_\Ga(x)^2)=v^x(x,t)\ge u(x,t),
$$
by \eqref{inequal}, and this concludes the proof.
\end{proof} 

\subsection{First-order asymptotics for general domains}
\label{sec:first-order-general-domains}
We are now ready to prove our short-time asymptotic result for the solution of \eqref{G-heat}-\eqref{boundary}.

\begin{thm}[Pointwise convergence]
\label{th:pointwise}
Set $1<p\le \infty$. Let $\Om$ be a domain in $\RR^N$, with boundary $\Ga$ such that $\Ga=\pa(\RR^N\setminus\ol\Om)$, and let $u$ be the bounded (viscosity) solution of \eqref{G-heat}-\eqref{boundary}.
\par 
Then, we have that
\begin{equation}
\label{pointwise-limit}
\lim_{t\to 0^+}4t\log \left[u(x,t)\right]=-p'\,d_\Ga(x)^2 \ \mbox{ for every } x\in\ol{\Om}.
\end{equation}
\end{thm}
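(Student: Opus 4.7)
The plan is to derive the two inequalities
\[
\liminf_{t\to 0^+} 4t\log u(x,t) \ge -p'\,d_\Ga(x)^2
\quad\text{and}\quad
\limsup_{t\to 0^+} 4t\log u(x,t) \le -p'\,d_\Ga(x)^2
\]
by sandwiching $u$ between the two global barriers constructed in Lemmas \ref{below-barriers} and \ref{above-barrier}. The case $x\in\Ga$ is trivial since $u(x,t)=1$, and we may therefore concentrate on $x\in\Om$, for which $d_\Ga(x)>0$.

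For the lower bound, I would fix $z\in\RR^N\setminus\ol\Om$ (so that $d_\Ga(z)>0$) and apply Lemma \ref{below-barriers} to get $u(x,t)\ge U^z(x,t)$. Using the explicit expression \eqref{below-barriers formula} together with the formula \eqref{global solution} for $\Phi$, one finds
\[
4t\log u(x,t) \ge 4t\log\!\bigl[A_{N,p}\,d_\Ga(z)^{\frac{N+p-2}{p-1}}\bigr] - \tfrac{2(N+p-2)}{p-1}\,t\log t - p'|x-z|^2.
\]
As $t\to 0^+$ the first two terms vanish (the constant is finite because $d_\Ga(z)>0$), yielding $\liminf 4t\log u(x,t)\ge -p'|x-z|^2$ for every admissible $z$. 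Taking the supremum over $z\in\RR^N\setminus\ol\Om$ amounts to computing $\inf|x-z|$. The hypothesis $\Ga=\pa(\RR^N\setminus\ol\Om)$ is precisely what guarantees that this infimum equals $d_\Ga(x)$: any $y\in\Ga$ is approached by exterior points $z_n\in\RR^N\setminus\ol\Om$, so $\inf_{z\in\RR^N\setminus\ol\Om}|x-z|=d_\Ga(x)$ for $x\in\Om$. This produces the desired $\liminf$ bound.

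For the upper bound, I would exploit Lemma \ref{above-barrier}, which gives $u(x,t)\le V(x,t)=u_B(0,t/d_\Ga(x)^2)$, where $u_B$ is the solution in the unit ball. Theorem \ref{Asymptotic in the ball} applied at the center of the unit ball (so $R=1$ and $d_\Ga(0)=1$) yields $\lim_{s\to 0^+} 4s\log u_B(0,s)=-p'$. With $s=t/d_\Ga(x)^2$ this rescales to
\[
\lim_{t\to 0^+}4t\log V(x,t)
= d_\Ga(x)^2\,\lim_{s\to 0^+} 4s\log u_B(0,s)
= -p'\,d_\Ga(x)^2,
\]
so $\limsup 4t\log u(x,t)\le -p'\,d_\Ga(x)^2$. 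Combining the two bounds gives the claimed pointwise limit.

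The main conceptual obstacle is handling the lower bound, because the natural barrier $U^z$ depends on an external point $z$ whose distance to $\Ga$ degenerates precisely when $z$ approaches the optimal boundary point; the remedy is to fix $z$ first, pass to the limit $t\to 0^+$ so that the $t\log[\,\cdot\,]$ term absorbs the dependence on $d_\Ga(z)$, and only afterward take the supremum over $z$. Without the topological assumption $\Ga=\pa(\RR^N\setminus\ol\Om)$ this last step would fail, and the conclusion could not be reduced to $d_\Ga(x)$; everything else reduces to routine estimates on $\Phi$ and to rescaling the one-variable asymptotics already proved for the ball.
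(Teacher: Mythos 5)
Your proposal is correct and follows essentially the same route as the paper: for $x\in\Om$ you sandwich $u$ between $U^z$ (Lemma \ref{below-barriers}) and $V$ (Lemma \ref{above-barrier}), compute the explicit $4t\log$ limit for $U^z$, rescale the ball asymptotics of Theorem \ref{Asymptotic in the ball} to obtain \eqref{convergence-from-above} for $V$, and then use the hypothesis $\Ga=\pa(\RR^N\setminus\ol\Om)$ to push the lower bound up to $-p'\,d_\Ga(x)^2$. Your phrasing of the last step as a supremum over exterior points is equivalent to the paper's sequence $z_n\to z\in\Ga\cap\pa B_r(x)$.
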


\begin{proof}
It is clear that \eqref{pointwise-limit} holds for $x\in\Ga$. Thus, for $x\in\Om$, set $r=d_\Ga(x)$ and
let $z$ be a point in $\Ga\cap\pa B_r(x)$. Since $\Ga=\pa(\RR^N\setminus\ol\Om)$, there is a sequence of points in $z_n\in\RR^N\setminus\ol{\Om}$ that converges to $z$.
\par
We first compute the limit in \eqref{pointwise-limit}, by replacing $u$ by the barriers constructed in Lemmas \ref{below-barriers} and \ref{above-barrier}.
In fact, from \eqref{below-barriers formula} and \eqref{global solution}, we easily compute that
$$
\lim_{t\to 0^+} 4t\,\log U^{z_n}(x,t)=-p' |x-z_n|^2,
$$
whereas, by Theorem \ref{Asymptotic in the ball}, we infer that 
\begin{equation}
\label{convergence-from-above}
\lim_{t\to 0^+} 4t\,\log V(x,t)=-p'\,d_\Ga(x)^2.
\end{equation}
\par
Now, for each $n\in\NN$, Lemmas \ref{below-barriers} and \ref{above-barrier} tell us that
\begin{equation*}
4t\log U^{z_n}(x,t)\leq 4t\log u(x,t)\leq 4t\log V(x,t),
\end{equation*}
for $t>0$. Thus, we get that
\begin{multline*}
-p' |x-z_n|^2=\liminf_{t\to 0^+}4t\log U^z(x,t)\le \liminf_{t\to 0^+} 4t\log u(x,t)\le \\
\le \limsup_{t\to 0^+}4t\log u(x,t)\leq \limsup_{t\to 0^+}4t \log V(x,t)=-p'\,d_\Ga(x)^2.
\end{multline*}
Letting $z_n$ tend to $z$ gives the conclusion, since $|x-z|=d_\Ga(x)$.
\end{proof}

\begin{rem}
{\rm
\label{rem:uniform-convergence-V}
Notice that the convergence in \eqref{convergence-from-above} is uniform on every subset of $\ol\Om$ where the $d_\Ga$ is bounded; actually, we obtain that
$$
4t\log V(x,t)+p'\,d_\Ga(x)^2=O(t\log t),
$$
for $t\to 0^+$, on such subsets.  Indeed, by a comparison with the one-dimensional solution as done in Theorem \ref{Asymptotic in the ball}, we can write that
\begin{equation}
\label{from above}
4t\, \log \Erfc\left(\frac{\sqrt{p'} d_\Ga(x)}{2 \sqrt{t}}\right)\le 4t\, \log V(x,t)
\end{equation}
while \eqref{lambda-inequality} with $\la=\frac{2}{\sqrt{p'}}$, \eqref{ball solution formula} and some manipulations give for $1<p<\infty$ that
\begin{multline}
  \label{eq:uniform-from-above}
  4t\log V(x,t)\leq\\
-p'\,d_\Ga(x)^2+4t\log\left[\frac{\int_{0}^{\pi}(\sin\te)^\al\,d\te}{\int_{0}^{\pi}e^{-p'\frac{1-\cos\te}{2t}d_\Ga(x)^2}(\sin\te)^\al\,d\te}\right].
\end{multline}
The explicit expressions in \eqref{from above} and \eqref{eq:uniform-from-above}, imply the desired claim. 
\par 
The case $p=\infty$ is analogous, simpler, and even yields the better behavior $O(t)$ as $t\to 0$.
}
\end{rem}

\par
With some extra sufficient condition on the regularity of $\Om$, we can obtain uniform convergence.
\par
Let $\om:(0,\infty)\to (0,\infty)$ be a strictly increasing continuous function such that $\om(\tau)\to 0$ as $\tau\to 0^+$. We say that a domain $\Om$ is of class $C^{0,\om}$, if there exists a number $r>0$ such that, for every point $x\in\Ga$, there is a coordinate system $(y',y_N)\in\RR^{N-1}\times\RR$, and a function $\phi:\RR^{N-1}\to\RR$ such that
\begin{enumerate}[(i)]
\item
$B_r(x)\cap\Om=\{(y',y_N)\in B_r(x):y_N<\phi(y')\}$;
\item
$B_r(x)\cap\Ga=\{(y',y_N)\in B_r(x):y_N=\phi(y')\}$;
\item
$|\phi(y')-\phi(z')|\le\om(|y'-z'|)$ for all $(y',\phi(y')), (z',\phi(z'))\in B_r(x)\cap\Ga$.
\end{enumerate}

\begin{thm}[Uniform convergence]
\label{th:uniform}
Let $1<p\le\infty$, suppose that $\Om$ is a domain of class $C^{0,\om}$, and set
$$
\psi(t)=\min_{0\le s\le R}\sqrt{s^2+[t-\om(s)]^2}.
$$
Let $u$ be the bounded (viscosity) solution of \eqref{G-heat}-\eqref{boundary}.
\par
Then, it holds that
\begin{equation}
\label{uniform-estimate}
4t\,\log u(x,t)+p'\,d_\Ga(x)^2=O(t \log \psi(t)) \ \mbox{  as } \ t\to 0^+,
\end{equation}
uniformly on every compact subset of $\ol\Om$. 
In particular, if $t \log\psi(t)\to 0$ as $t\to 0^+$,
then the solution $u$ of \eqref{G-heat}-\eqref{boundary} satisfies \eqref{pointwise-limit} uniformly on every compact subset of $\ol\Om$.
\end{thm}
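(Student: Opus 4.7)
The plan is to refine the pointwise argument of Theorem \ref{th:pointwise}, reusing the two-sided barriers from Lemmas \ref{below-barriers} and \ref{above-barrier}, while tracking errors quantitatively. The upper half is essentially contained in Remark \ref{rem:uniform-convergence-V}; the lower half requires choosing the barrier point $z\in\RR^N\setminus\ol\Om$ in a way that exploits the $C^{0,\om}$-regularity of $\Ga$.

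For the upper estimate, $u\le V$ from Lemma \ref{above-barrier} and the bound of Remark \ref{rem:uniform-convergence-V}, which is uniform on any subset of $\ol\Om$ on which $d_\Ga$ is bounded, give
$$
4t\log u(x,t)+p'\,d_\Ga(x)^2\le 4t\log V(x,t)+p'\,d_\Ga(x)^2=O(t\log t),
$$
uniformly on any compact $K\subset\ol\Om$. Taking $s=0$ in the definition of $\psi$ shows $\psi(t)\le t$, hence $|t\log t|\le|t\log\psi(t)|$ for small $t$, so the right-hand side is absorbed into $O(t\log\psi(t))$.

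For the lower estimate, fix $x\in K\cap\Om$ and let $y_x\in\Ga$ achieve $d_\Ga(x)$. In the local coordinates furnished by the $C^{0,\om}$-condition at $y_x$, with $\phi(0)=0$ and $e_N$ pointing into $\RR^N\setminus\ol\Om$, set $z_t=y_x+t\,e_N$ for $t$ small. I would then verify three facts: \emph{(i)} $z_t\notin\ol\Om$, since $t>0=\phi(0)$; \emph{(ii)} the key geometric inequality $d_\Ga(z_t)\ge\psi(t)$, obtained because any graph point $(y',\phi(y'))$ within the chart satisfies $|z_t-(y',\phi(y'))|^2\ge |y'|^2+(t-\om(|y'|))^2$ thanks to $|\phi(y')|\le\om(|y'|)$, and minimizing over $|y'|\in[0,R]$ yields $\psi(t)^2$, while points of $\Ga$ outside the chart lie at distance at least $r-t$, which dominates $\psi(t)$ for small $t$; \emph{(iii)} $\bigl|\,|x-z_t|^2-d_\Ga(x)^2\bigr|=O(t)$ uniformly on $K$, from $d_\Ga(x)-t\le |x-z_t|\le d_\Ga(x)+t$ and the boundedness of $d_\Ga$ on $K$. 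Plugging $z=z_t$ into the explicit expression \eqref{below-barriers formula} for $U^{z_t}$, combined with \eqref{global solution}, and invoking $u\ge U^{z_t}$ from Lemma \ref{below-barriers}, I would obtain
$$
4t\log u(x,t)+p'\,d_\Ga(x)^2\ge\frac{2(N+p-2)}{p-1}\,t\log d_\Ga(z_t)+O(t\log t)+O(t),
$$
uniformly on $K$. Using (ii) together with the fact that $d_\Ga(z_t)$ is bounded above on $K$, this is $O(t\log\psi(t))$, since the contribution of $t\log d_\Ga(z_t)\ge t\log\psi(t)$ dominates (recall $\log\psi(t)<0$ for small $t$). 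The boundary case $x\in\Ga\cap K$ is trivial, as $u\equiv 1$ there.

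The final assertion follows immediately: under $t\log\psi(t)\to 0$, the error in \eqref{uniform-estimate} tends to $0$ uniformly on $K$, so $4t\log u(x,t)\to -p'\,d_\Ga(x)^2$ uniformly on compact subsets of $\ol\Om$. The main obstacle is verifying item (ii), since the only regularity of $\Ga$ is a modulus-of-continuity bound on its local graph representation, with no interior sphere, Lipschitz, or $C^1$ structure at our disposal; the remaining steps are quantitative bookkeeping in the explicit barrier formulas, with the case $p=\infty$ being handled analogously (and more simply, yielding a better $O(t)$ error, as already noted in Remark \ref{rem:uniform-convergence-V}).
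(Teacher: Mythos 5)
Your proposal follows the same route as the paper's proof: the upper estimate comes from Remark~\ref{rem:uniform-convergence-V} (and you add the useful observation $\psi(t)\le t$ so that $O(t\log t)$ is absorbed into $O(t\log\psi(t))$), while for the lower estimate you pick a time-dependent point $z_t$ at height $t$ in the local chart, feed it into the barrier $U^{z_t}$ of Lemma~\ref{below-barriers}, and control everything uniformly via $d_\Ga(z_t)\ge\psi(t)$.

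The one place to be careful is the justification you give for step (ii). The pointwise inequality $|z_t-(y',\phi(y'))|^2\ge|y'|^2+\bigl(t-\om(|y'|)\bigr)^2$, claimed to follow from $|\phi(y')|\le\om(|y'|)$, is false: it reduces to $(t-\phi(y'))^2\ge(t-\om(|y'|))^2$, which fails whenever $\om(|y'|)>t$, since then $\phi(y')$ is permitted to equal $t$, making the left-hand side $0$ while the right-hand side is $(\om(|y'|)-t)^2>0$. The conclusion $d_\Ga(z_t)\ge\psi(t)$ is nonetheless correct, but it requires a case split: if $\om(|y'|)\le t$ your squared inequality does hold, while if $\om(|y'|)>t$ one instead uses $|z_t-(y',\phi(y'))|\ge|y'|>\om^{-1}(t)\ge\psi(t)$, the last bound being obtained by evaluating the minimand defining $\psi(t)$ at $s=\om^{-1}(t)$. (The paper's proof asserts informally that $d_\Ga(z(t))$ is bounded below by the distance to the graph of $\om$, which carries the same implicit subtlety; your phrasing as a sharp pointwise estimate is what makes the gap visible.) A minor arithmetic slip as well: from \eqref{below-barriers formula} and \eqref{global solution}, the coefficient of $t\log d_\Ga(z_t)$ in $4t\log U^{z_t}(x,t)$ is $\tfrac{4(N+p-2)}{p-1}$, not $\tfrac{2(N+p-2)}{p-1}$ (the latter multiplies the $t\log t$ term); this does not affect the claimed order, since both contributions are $O(t\log\psi(t))$.
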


\begin{proof}
In Remark \ref{rem:uniform-convergence-V}, we already observed that in \eqref{pointwise-limit} the convergence is uniform from above. 
We shall now modify the argument for the barrier $U^{z_n}(x,t)$ in such a way that it becomes uniform in $x$. In fact, we will choose the points $z_n$ along a suitable curve parametrized upon the time $t$. 
\par
For every $x\in\Om$, we choose a coordinate system $(y',y_N)\in\RR^{N-1}\times\RR$, with its origin at a point in $\Ga$ at minimal distance $d_\Ga(x)$ from $x$. In this coordinate system, we choose $z(t)=(0',t)$ that, if $t$ is small enough is by construction a point in $\RR^N\setminus\ol{\Om}$, since $t>\phi(0')$. Also, by our assumptions on $\Om$, $d_\Ga(z(t))$ is bounded from below by the distance of $z(t)$ from the graph of the function $y'\mapsto\om(|y'|)$ defined for $y'\in\{y\in B_r(0): y_N=0\}$, that is
$$
d_\Ga(z(t))\ge \min_{0\le s\le r}\sqrt{s^2+[\om(s)-t]^2}.
$$ 
\par
It is clear that this construction does not depend on the particular point $x\in\ol{\Om}$ chosen, but only on the regularity assumptions on $\Om$. Thus, we define our uniform barrier from below by $U^{z(t)}(x,t)$ and hence, from the definition of $U^z(x,t)$, we have that
\begin{multline*}
4t\,\log U^{z(t)}(x,t)=\\
4t\,\log\left[A_{N,p}\,t^{-\frac{N+p-2}{2(p-1)}}\right]+4t\,\frac{N+p-2}{p-1}\,\log d_\Ga(z(t))-p'\,|z(t)-x|^2,
\end{multline*}
and hence
\begin{multline}
\label{from below}
4t\,\log U^{z(t)}(x,t)\ge \\
4t\,\log\left[A_{N,p}\,t^{-\frac{N+p-2}{2(p-1)}}\right]+4t\,\frac{N+p-2}{p-1}\,\log\psi(t)-p'\,[d_\Ga(x)+t]^2,
\end{multline}
since $|z(t)-x|\le|z(t)|+|x|=t+d_\Ga(x)$.
The desired estimate \eqref{uniform-estimate} follows from an inspection of \eqref{from below} and Remark \ref{rem:uniform-convergence-V}.
\end{proof}

\begin{rem}
{\rm
Under sufficient assumptions on $\om$, we can replace $\psi$ by $a\,\om^{-1}$, for some positive constant $a$, where $\om^{-1}$ is the inverse function of $\om$. For instance, if $\Om$ is of class $C^\al$, with $0<\al<1$ --- that means that $\Ga$ is locally a graph of an $\al$-H\"older continuous function --- then the assumptions of Theorem \ref{th:uniform} are fulfilled, since $\psi(t)\ge a\,t^{1/\al}$ as $t\to 0^+$.
}
\end{rem}

The same assertion of Theorem \ref{th:uniform} holds true even if we replace $1$ in  \eqref{boundary} by a bounded time-dependent non-constant  boundary data, provided that this is bounded away from zero.

\begin{cor}
\label{positive  boundary data}
Let $w$ be the bounded solution of \eqref{G-heat}, \eqref{initial} satisfying
$$
w=h\ \mbox{ on }\ \Ga\times(0,\infty),
$$
where the function $h:\Ga\times(0,\infty)\to\RR$ is such that 
$$
\ul{h}\leq h \leq \ol{h} \ \mbox{ on } \ \Ga\times(0,\infty),
$$ 
for some positive numbers $\ul{h}, \ol{h}$.
\par 
Then, we have that
$$
4 t \log w(x,t)= -p'd_\Ga(x)^2+O(t\log \psi(t)) \ \mbox{ as }\ t\to 0^+,
$$
uniformly on every compact subset of $\ol\Om$.
\end{cor}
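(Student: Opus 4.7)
The plan is to sandwich $w$ between two multiples of the solution $u$ of the original problem \eqref{G-heat}-\eqref{boundary} and then apply Theorem \ref{th:uniform} directly. The key observation is that $\De_p^G$ is $1$-homogeneous: for any positive constant $c>0$, one has $\De_p^G(c\,v)=c\,\De_p^G v$, so $\pa_t-\De_p^G$ commutes with multiplication by positive constants. Consequently, if $u$ is the bounded viscosity solution of \eqref{G-heat}-\eqref{boundary}, then $\ul{h}\,u$ and $\ol{h}\,u$ satisfy \eqref{G-heat} and \eqref{initial}, with constant boundary values $\ul{h}$ and $\ol{h}$ respectively.

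First, I would invoke the parabolic comparison principle (Theorem \ref{parabolic cp}): since $\ul{h}\le h\le \ol{h}$ on $\Ga\times(0,\infty)$ and all three functions share the zero initial datum, we obtain
$$
\ul{h}\,u(x,t)\le w(x,t)\le \ol{h}\,u(x,t) \ \mbox{ on } \ \ol{\Om}\times(0,\infty).
$$
Taking logarithms (note $w>0$ on $\ol{\Om}\times(0,\infty)$ by the maximum principle, and similarly for $u$) and multiplying by $4t$ yields
$$
4t\log\ul{h}+4t\log u(x,t)\le 4t\log w(x,t)\le 4t\log\ol{h}+4t\log u(x,t).
$$

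Next, I would apply Theorem \ref{th:uniform} to substitute $4t\log u(x,t)=-p'\,d_\Ga(x)^2+O(t\log\psi(t))$ uniformly on compact subsets of $\ol{\Om}$. This gives
$$
4t\log w(x,t)+p'\,d_\Ga(x)^2=O(t)+O(t\log\psi(t))
$$
as $t\to 0^+$, where the $O(t)$ term comes from the fixed constants $4\log\ul{h}$ and $4\log\ol{h}$. Since $\psi(t)\to 0^+$ as $t\to 0^+$, we have $|\log\psi(t)|\to\infty$, and hence any $O(t)$ quantity is automatically $O(t\log\psi(t))$. Combining, we conclude the claimed asymptotic estimate.

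The argument is essentially routine once the two ingredients are in place: comparison for the game-theoretic $p$-caloric equation and the $1$-homogeneity of $\De_p^G$. The only minor point worth checking is that the viscosity comparison principle invoked in the earlier sections applies to the supersolution $\ol{h}\,u$ and subsolution $\ul{h}\,u$ built from $u$ --- but this is immediate from the stability of viscosity (sub/super)solutions under multiplication by positive constants, itself a consequence of $1$-homogeneity. No further refinement of the barriers from Subsection \ref{sec:first-order-general-domains} is needed.
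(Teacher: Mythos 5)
Your proof is correct and matches the paper's argument essentially line for line: both sandwich $w$ between $\ul{h}\,u$ and $\ol{h}\,u$ via the comparison principle, take logarithms, and invoke Theorem \ref{th:uniform}. The paper leaves the $1$-homogeneity justification and the absorption of the $O(t)$ error into $O(t\log\psi(t))$ implicit, but you correctly supply those details.
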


\begin{proof}
Since $\ul{h}\, u \leq w \leq \ol{h}\, u$ on $\Ga\times(0,\infty)$, we can apply Theorem  \ref{parabolic cp} to get:
$$
\ul{h}\, u(x,t)\leq w(x,t) \leq \ol{h}\, u(x,t) \ \mbox{ on } \ \ol\Om\times(0,\infty).
$$
This implies that, for every $x\in\ol\Om$ and $t>0$,
\begin{equation*}
  4t \log\ul{h}+4 t \log u(x,t) \leq 4 t \log w(x,t) \leq  4t \log\ol{h}+4 t \log u(x,t).
\end{equation*}
The conclusion then easily follows from Theorem \ref{th:uniform}.
\end{proof}

The next corollary of  Theorem \ref{th:uniform} will be useful in Section \ref{sec:second-parabolic}.

\begin{cor}
\label{cor:uniform}
Let $v:\ol{\Om}\times(0,\infty)\to\RR$ be defined by 
$$
\Erfc\left(\frac{\sqrt{p'}\, v(x,t)}{2 \sqrt{t}}\right)=u(x,t) \ \mbox{ for } \ (x,t)\in\ol{\Om}\times(0,\infty).
$$
Then
$$
v(x,t)=d_\Ga(x)+O(t\log \psi(t)) \ \mbox{ as } \ t\to 0^+,
$$
uniformly on every compact subset of $\ol\Om$.
\end{cor}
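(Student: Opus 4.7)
The plan is to solve the defining relation
$$u(x,t)=\Erfc\!\left(\frac{\sqrt{p'}\,v(x,t)}{2\sqrt t}\right)$$
for $v(x,t)$ asymptotically, by combining the sharp estimate of Theorem~\ref{th:uniform} with the classical large-argument expansion of $\Erfc$, together with a half-space comparison that provides a one-sided safety bound.

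\textbf{Step 1 (half-space comparison).} Repeating the comparison used at the start of the proof of Theorem~\ref{Asymptotic in the ball}, I enclose $\Om$ in the half-space $H$ tangent to $\Ga$ at the point of $\Ga$ closest to $x$. Theorem~\ref{parabolic cp} together with Proposition~\ref{Asymp hspace} yields
$$u(x,t)\ge \Erfc\!\left(\frac{\sqrt{p'}\,d_\Ga(x)}{2\sqrt t}\right)\quad\text{on }\ol\Om\times(0,\infty),$$
so by the strict monotonicity of $\Erfc$,
$$v(x,t)\le d_\Ga(x)\quad\text{on }\ol\Om\times(0,\infty).$$

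\textbf{Step 2 (inversion of $\Erfc$ against Theorem~\ref{th:uniform}).} From the standard asymptotic
$$-\log\Erfc(\sigma)=\sigma^2+\log(\sqrt{\pi}\,\sigma)+O(\sigma^{-2})\qquad(\sigma\to\infty),$$
evaluated at $\sigma=\sqrt{p'}\,v(x,t)/(2\sqrt t)$ and multiplied by $-4t$, I obtain
$$-4t\log u(x,t)=p'\,v(x,t)^2+2t\log\!\left(\frac{\pi p'\,v(x,t)^2}{4t}\right)+O\!\left(\frac{t^2}{v(x,t)^2}\right).$$
Matching this with the sharp identity $-4t\log u(x,t)=p'\,d_\Ga(x)^2+O(t|\log\psi(t)|)$ provided by Theorem~\ref{th:uniform} produces the key identity
$$p'\,\bigl(v(x,t)^2-d_\Ga(x)^2\bigr)=-2t\log\!\left(\frac{\pi p'\,v(x,t)^2}{4t}\right)+O(t|\log\psi(t)|)+O\!\left(\frac{t^2}{v(x,t)^2}\right).$$

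\textbf{Step 3 (conclusion on compact sets).} Fix a compact $K\subset\ol\Om$. By Step~1 both $v$ and $d_\Ga$ are bounded on $K$, and a brief bootstrap based on the identity of Step~2 shows that $v$ is comparable to $d_\Ga$ whenever $d_\Ga$ is bounded below. I then split $K$ according to whether $d_\Ga(x)$ is above or below a threshold comparable to $t|\log\psi(t)|$. On the ``bulk'' region, the logarithmic correction $2t\log(\pi p'\,v^2/(4t))$ is of order $t|\log t|$, which is absorbed into $O(t|\log\psi(t)|)$ because $\psi(t)\le t$ near $0$; hence $v^2-d_\Ga^2=O(t|\log\psi(t)|)$ and dividing by $v+d_\Ga\ge d_\Ga$ delivers $|v-d_\Ga|=O(t|\log\psi(t)|)$. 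On the complementary ``near-boundary'' region, the trivial inequality $0\le d_\Ga-v\le d_\Ga$ from Step~1 is already of the required order.

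\textbf{Main obstacle.} The most delicate point is uniformity up to $\Ga$: as $d_\Ga\to 0$ the asymptotic inversion of $\Erfc$ degenerates, since a naive division by $v+d_\Ga$ loses a factor $1/d_\Ga$. The monotone one-sided bound $v\le d_\Ga$ of Step~1 is what saves the argument, simultaneously providing the near-boundary control and, via the bootstrap, confining $v$ to the same order as $d_\Ga$ in the bulk, so that the logarithmic correction of Step~2 can be absorbed cleanly into the target $O(t\log\psi(t))$.
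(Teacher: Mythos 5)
Your Steps 1--2 capture the main mechanism of the paper's own argument: rewrite the defining identity $u=\Erfc(\sqrt{p'}v/(2\sqrt t))$ so that $4t\log u+p'v^2$ appears, feed in the uniform estimate \eqref{uniform-estimate} from Theorem~\ref{th:uniform}, and obtain a two-sided bound of order $t\log\psi(t)$ for $p'\bigl(v^2-d_\Ga^2\bigr)$. (The paper does this with the exact integral representation used in Proposition~\ref{Asymp hspace} rather than the large-$\sigma$ expansion of $\Erfc$; this matters, because your expansion has an $O(\sigma^{-2})=O(t^2/v^2)$ error that is uncontrolled where $v$ is comparable to $\sqrt t$, whereas the exact identity has a one-sided sign and a uniformly tame remainder. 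You do not address this degeneracy.)

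The serious gap is in Steps 1 and 3. In Step 1 you enclose $\Om$ in a half-space tangent to $\Ga$ at the nearest boundary point; this requires $\Om\subset H$, which holds for convex $\Om$ but not for a general $C^{0,\om}$ domain as in Theorem~\ref{th:uniform}. So the bound $v\le d_\Ga$ is not available in the generality of the statement, and the paper's own proof avoids it. In Step 3, your threshold split at $d_\Ga\sim t|\log\psi(t)|$ does not deliver the claim: on the bulk region you divide $|v^2-d_\Ga^2|=O(t|\log\psi(t)|)$ by $v+d_\Ga\ge d_\Ga\ge ct|\log\psi(t)|$ and obtain only $|v-d_\Ga|=O(1)$, not $O(t\log\psi(t))$. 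Indeed, from $|v^2-d_\Ga^2|\le C c(t)$ with $c(t)=t|\log\psi(t)|$ together with $0\le v\le d_\Ga$, the best one can extract over all positions is $d_\Ga-v\le\min\{d_\Ga,\,Cc(t)/d_\Ga\}\le \sqrt{Cc(t)}$, which is strictly weaker than the asserted $O(c(t))$: no choice of threshold closes this. You have correctly identified that passing from $v^2-d_\Ga^2$ to $v-d_\Ga$ is the delicate point (the paper itself stops at the sandwich for $p'(v^2-d_\Ga^2)$ and asserts the conclusion), but the patch you propose does not achieve the stated $O(t\log\psi(t))$ rate uniformly up to $\Ga$.
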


\begin{proof}
From the definition of $v(x,t)$, operating as in the proof of Proposition \ref{Asymp hspace} yields that
$$
4t\,\log u(x,t)+p'\,v(x,t)^2=4t\,\log\left( \sqrt{\frac{p'}{4\pi}}\int_0^\infty e^{-\frac12 p' \frac{v(x,t)}{\sqrt{t}}\,\si-\frac14 p' \si^2}d\si \right)\le 0.
$$
\par
By this inequality, since the first summand at the left-hand side converges uniformly on every compact $K\subset\ol{\Om}$ as $t\to 0^+$, we can infer that there exist $\ol{t}>0$ and $\de>0$ such that $0\le v(x,t)\le\de$ for any $x\in K$ and $0<t<\ol{t}$. 
\par
Thus, for $x\in K$ we have that 
\begin{multline*}
-\left[4t\,\log u(x,t)+p'\,d_\Ga(x)^2\right]+4t\,\log\left( \sqrt{\frac{p'}{\pi}}\int_0^\infty e^{-\frac12 p' \frac{\de}{\sqrt{t}}\,\si-\frac14 p' \si^2}d\si \right)\le \\
p'\,\left[v(x,t)^2-d_\Ga(x)^2\right]\le -\left[4t\,\log u(x,t)+p'\,d_\Ga(x)^2\right],
\end{multline*}
which implies the desired uniform estimate, by means of \eqref{uniform-estimate}.
\end{proof}

\section{Second-order asymptotics}
\label{sec:second-parabolic}

In this section, we shall suppose that $\Om$ is a domain of class $C^2$ (not necessarily bounded) and, for any point $y\in \Ga$, denote by $\ka_1(y),\dots,\ka_{N-1}(y)$ the principal curvatures of $\Ga$ at $y$ with respect to the interior normal direction to $\Ga$. Moreover, we let $\Pi_\Ga$ be the function defined in \eqref{def-function-Pi}:
$$
\Pi_\Ga(y)=\prod_{j=1}^{N-1}\left[1-R\,\ka_j(y)\right] \ \mbox{ for } \ y\in\Ga.
$$
We then recall a useful geometrical lemma (\cite[Lemma 2.1]{MS-PRSE}. 

\begin{lem}
\label{lem:geometric-asymptotics}
Let $x\in\Om$ and assume that, for $R>0$, there exists $y_x\in\Ga$ such that $\ol{B_R(x)}\cap(\RR^N\setminus\Om)=\{y_x\}$ and that $\ka_j(y_x)<1/R$ for $j=1,\dots,N-1$.
\par 
Then, it holds that
$$
\lim_{s\to 0^+}s^{-\frac{N-1}{2}}\cH_{N-1}(\Ga_s\cap B_R(x))=\frac{\om_{N-1}\,(2R)^{\frac{N-1}{2}}}{(N-1)\sqrt{\Pi_\Ga(y_x)}}, 
$$
where $\cH_{N-1}$ denotes $(N-1)$-dimensional Hausdorff measure and $\om_{N-1}$ is the surface area of a unit sphere in $\RR^{N-1}$.
\end{lem}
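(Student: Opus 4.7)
The plan is to localize near the unique touching point $y_x$, to parametrise $\Ga_s$ by the inward normal flow, and then to reduce the computation to the $(N-1)$-dimensional Lebesgue volume of a small ellipsoid.

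First, I would introduce a coordinate system $(y',y_N)\in\RR^{N-1}\times\RR$ centred at $y_x$ such that the interior unit normal $\nu$ to $\Ga$ at $y_x$ is $e_N$ and the directions $e_1,\dots,e_{N-1}$ are principal; then $x=R\,e_N$, and since $\Om$ is of class $C^2$ and $\ol{B_R(x)}\cap(\RR^N\setminus\Om)=\{y_x\}$, the boundary $\Ga$ is locally the graph $y_N=\phi(y')$ of a $C^2$ function with $\phi(0)=0$, $\na\phi(0)=0$, and $\na^2\phi(0)=\diag(\ka_1(y_x),\dots,\ka_{N-1}(y_x))$.

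Second, for $s$ smaller than the reach of $\Ga$, the map $y\mapsto z(y)=y+s\,\nu(y)$ is a diffeomorphism between a neighbourhood of $y_x$ in $\Ga$ and its image in $\Ga_s$, and the classical area formula for parallel hypersurfaces gives
\begin{equation*}
\cH_{N-1}(\Ga_s\cap B_R(x))=\int_{E_s}\prod_{j=1}^{N-1}\bigl(1-s\,\ka_j(y)\bigr)\,d\cH_{N-1}(y),
\end{equation*}
where $E_s=\{y\in\Ga:z(y)\in B_R(x)\}$. Using the graph parametrisation and the Taylor expansion $\phi(y')=\tfrac12\sum_j \ka_j(y_x)\,y_j^2+o(|y'|^2)$, a direct computation of the inequality $|z(y')-R\,e_N|^2<R^2$ reduces, after discarding higher order terms, to the quadratic condition
\begin{equation*}
\sum_{j=1}^{N-1}\bigl(1-R\,\ka_j(y_x)\bigr)\,y_j^2<2Rs+o(s)+o(|y'|^2),
\end{equation*}
so that the $y'$-projection of $E_s$ is trapped between two ellipsoids with semi-axes $\sqrt{2Rs/(1-R\,\ka_j(y_x))}\,(1+o(1))$ as $s\to 0^+$.

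Third, since $E_s$ contracts to the single point $y_x$ as $s\to 0^+$, the Jacobian $\prod_j(1-s\,\ka_j(y))$ and the surface element $\sqrt{1+|\na\phi(y')|^2}\,dy'$ both converge uniformly to $1$ on $E_s$; hence $\cH_{N-1}(\Ga_s\cap B_R(x))$ is asymptotic to the $(N-1)$-Lebesgue volume of the limiting ellipsoid, which equals
\begin{equation*}
\frac{\om_{N-1}}{N-1}\,\prod_{j=1}^{N-1}\sqrt{\frac{2Rs}{1-R\,\ka_j(y_x)}}=\frac{\om_{N-1}\,(2Rs)^{\frac{N-1}{2}}}{(N-1)\sqrt{\Pi_\Ga(y_x)}},
\end{equation*}
and dividing by $s^{(N-1)/2}$ produces the stated limit. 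The delicate step, which I expect to be the main obstacle, is making the trapping of $E_s$ between the inner and outer ellipsoids quantitative, so that the additive errors $o(s+|y'|^2)$ in the quadratic inequality are absorbed into a multiplicative $(1+o(1))$ correction of each semi-axis; this is where the $C^2$ regularity of $\Ga$ and the fact that $E_s$ lives on the $\sqrt{s}$-scale are used in an essential way.
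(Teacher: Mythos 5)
The paper does not prove this lemma; it quotes it verbatim from \cite[Lemma 2.1]{MS-PRSE}, so there is no internal proof for me to compare yours against. Taken on its own, your argument is correct and is the natural (and, to my knowledge, the originally intended) one: localize at the unique touching point $y_x$, parametrize $\Ga_s$ near $y_x$ by the inward normal flow $y\mapsto y+s\,\nu(y)$ with the Weingarten Jacobian $\prod_j(1-s\,\ka_j(y))$, write $\Ga$ locally as the graph of a $C^2$ function $\phi$ with $\na^2\phi(0)=\diag(\ka_1,\dots,\ka_{N-1})$, and expand $|z(y')-R\,e_N|^2<R^2$ to arrive at $\sum_j(1-R\,\ka_j(y_x))\,y_j^2<2Rs(1+o(1))$, whose $(N-1)$-volume is $\frac{\om_{N-1}}{N-1}\prod_j\sqrt{2Rs/(1-R\,\ka_j(y_x))}$, which gives the claimed constant after dividing by $s^{(N-1)/2}$.

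Two small points worth tightening, though neither is a real gap. First, you need the observation that $\Ga_s\cap B_R(x)$ is, for small $s$, entirely contained in the range of the local normal map near $y_x$; this follows from $\ol{B_R(x)}\cap(\RR^N\setminus\Om)=\{y_x\}$ and the $C^2$ regularity, which force $\Ga_s\cap B_R(x)\to\{y_x\}$ as $s\to0^+$ and put $E_s$ inside an arbitrarily small neighbourhood of $0$. Second, the quantitative trapping you correctly flag as the delicate step is handled cleanly once you note $\la_{\min}|y'|^2\le Q(y'):=\sum_j(1-R\,\ka_j)\,y_j^2$, so the additive error $o(|y'|^2)$ in the quadratic inequality is bounded by $\ve\,Q(y')/\la_{\min}$ with $\ve\to0$ on the shrinking set $E_s$; this converts the inequality into $Q(y')<2Rs\,(1+o(1))$ with a \emph{uniform} $o(1)$, i.e.\ a multiplicative $1+o(1)$ on each semi-axis. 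With those remarks the proof is complete and matches the approach of the cited reference.
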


\subsection{Short-time asymptotics for heat content}
\label{sec:asymptotics-heat-content}
Our asymptotic result for the heat content of a ball $B_R(x)$ is based on the following lemma.

\begin{lem}[Short-time asymptotics for a barrier]
\label{lem:barrier-asimptotics}
Let $x\in\Om$ and assume that, for $R>0$, there exists $y_x\in\Ga$ such that $\ol{B_R(x)}\cap(\RR^N\setminus\Om)=\{y_x\}$ and that $\ka_j(y_x)<1/R$ for $j=1,\dots,N-1$.
\par
Let $f$ be a continuous function on $\RR$ such that 
$$
\lim_{s\to\infty} f(s)=0 \quad \mbox{ and } \quad \int_0^\infty s^\frac{N-1}{2}f(s)\,ds<\infty.
$$ 
Let $\xi, \eta:(0,\infty)\to(0,\infty)$ be two functions of time such that 
$\xi(t)$ is positive in $(0,\infty)$, and
$$
\lim_{t\to 0^+} \xi(t)=\lim_{t\to 0^+} \eta(t)=0.
$$ 
\par
Then it holds that
\begin{multline}
\label{general-asymptotics}
\lim_{t\to 0^+} \xi(t)^{-\frac{N+1}{2}}\int_{B_R(x)}f\left(\frac{d_\Ga(z)}{\xi(t)}+\eta(t)\right) dz=\\
\frac{\om_{N-1}\,(2R)^\frac{N-1}{2}}{(N-1)\sqrt{\Pi_\Ga(y_x)}}\,\int_0^\infty s^\frac{N-1}{2}f(s)ds.
\end{multline}
\end{lem}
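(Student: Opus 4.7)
The strategy is to reduce the $N$-dimensional integral to a one-dimensional integral over the values of $d_\Ga$ by means of the coarea formula, and then invoke Lemma \ref{lem:geometric-asymptotics} to extract the prescribed asymptotic behavior.

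Since $d_\Ga$ is $1$-Lipschitz with $|\na d_\Ga|=1$ almost everywhere in $\Om$, the coarea formula gives
$$
\int_{B_R(x)} f\!\left(\frac{d_\Ga(z)}{\xi(t)}+\eta(t)\right)\,dz
=\int_0^{\infty} f\!\left(\frac{s}{\xi(t)}+\eta(t)\right)\mathcal{H}_{N-1}(\Ga_s\cap B_R(x))\,ds,
$$
where $\Ga_s=\{z\in\Om:d_\Ga(z)=s\}$ and the integrand vanishes for $s$ larger than $\diam(B_R(x))$. Performing the change of variable $s=\xi(t)\sigma$ and dividing by $\xi(t)^{(N+1)/2}$, the quantity whose limit we seek becomes
$$
\int_0^{\infty} f(\sigma+\eta(t))\,\frac{\mathcal{H}_{N-1}(\Ga_{\xi(t)\sigma}\cap B_R(x))}{\xi(t)^{(N-1)/2}}\,d\sigma.
$$

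For each fixed $\sigma>0$, Lemma \ref{lem:geometric-asymptotics} applied with $s=\xi(t)\sigma\to 0^+$ yields
$$
\frac{\mathcal{H}_{N-1}(\Ga_{\xi(t)\sigma}\cap B_R(x))}{\xi(t)^{(N-1)/2}}
=\sigma^{\frac{N-1}{2}}\,\frac{\mathcal{H}_{N-1}(\Ga_{\xi(t)\sigma}\cap B_R(x))}{(\xi(t)\sigma)^{(N-1)/2}}
\ \longrightarrow\ \sigma^{\frac{N-1}{2}}\,\frac{\om_{N-1}(2R)^{(N-1)/2}}{(N-1)\sqrt{\Pi_\Ga(y_x)}}.
$$
Combined with the continuity of $f$ and $\eta(t)\to 0$, the integrand converges pointwise on $(0,\infty)$ to $\frac{\om_{N-1}(2R)^{(N-1)/2}}{(N-1)\sqrt{\Pi_\Ga(y_x)}}\,\sigma^{(N-1)/2}f(\sigma)$, whose integral on $(0,\infty)$ is exactly the right-hand side of \eqref{general-asymptotics}.

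The main obstacle is justifying the interchange of the limit with the integral, which I will achieve via dominated convergence. Combining Lemma \ref{lem:geometric-asymptotics} (which controls the near-zero behavior) with a uniform perimeter estimate stemming from the $C^2$-regularity of $\Ga$ (which controls $\mathcal{H}_{N-1}(\Ga_s\cap B_R(x))$ for $s$ bounded away from $0$), one obtains a constant $C>0$ such that
$$
\frac{\mathcal{H}_{N-1}(\Ga_{\xi(t)\sigma}\cap B_R(x))}{\xi(t)^{(N-1)/2}}\le C\,\sigma^{\frac{N-1}{2}} \ \mbox{ for every }\ \sigma\in(0,\infty) \ \mbox{ and } \ t \ \mbox{ small}.
$$
Since $f$ is continuous and infinitesimal at infinity it is bounded on $[0,\infty)$, and assuming without loss of generality that $\eta(t)\le 1$ for $t$ small, the product $\sigma^{(N-1)/2}|f(\sigma+\eta(t))|$ is then dominated on $(0,\infty)$ by an integrable majorant, thanks to the hypothesis $\int_0^\infty s^{(N-1)/2}f(s)\,ds<\infty$. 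Dominated convergence then concludes the proof.
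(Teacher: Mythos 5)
Your strategy—coarea formula, rescaling, Lemma \ref{lem:geometric-asymptotics} for the pointwise limit, and dominated convergence—is the same as the paper's. The difference is the change of variable, and that is precisely where a gap appears. You substitute $s=\xi(t)\sigma$, so the integrand contains $f(\sigma+\eta(t))$; the paper instead substitutes $\sigma=s/\xi(t)+\eta(t)$, so $f(\sigma)$ appears directly and the $t$-dependence is pushed into the elementary factor $[\sigma-\eta(t)]^{\frac{N-1}{2}}$, which is bounded by $c\bigl(\sigma^{\frac{N-1}{2}}+1\bigr)$ on $[\eta(t),\infty)$ for small $t$. With your substitution, dominated convergence requires a $t$-independent pointwise integrable majorant for $\sigma^{\frac{N-1}{2}}|f(\sigma+\eta(t))|$, and your assertion that one exists ``thanks to the hypothesis $\int_0^\infty s^{\frac{N-1}{2}}f(s)\,ds<\infty$'' is not justified: since $f$ is not assumed monotone, the one-sided maximal function $\sup_{0\le\eta\le 1}|f(\sigma+\eta)|$ can fail to satisfy $\int_0^\infty \sigma^{\frac{N-1}{2}}\sup_{0\le\eta\le 1}|f(\sigma+\eta)|\,d\sigma<\infty$ even when $\int_0^\infty \sigma^{\frac{N-1}{2}}|f(\sigma)|\,d\sigma<\infty$ (think of increasingly narrow unit-height spikes near the integers). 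Uniform $L^1$ bounds like $\int_0^\infty\sigma^{\frac{N-1}{2}}|f(\sigma+\eta(t))|\,d\sigma\le\int_0^\infty\tau^{\frac{N-1}{2}}|f(\tau)|\,d\tau$ do hold, but they are not a pointwise dominant.

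The fix is easy and you already have all the pieces: either (a) use the paper's affine substitution so that $f$ is evaluated at a $t$-independent argument, or (b) keep your substitution but pass to $\tau=\sigma+\eta(t)$ inside the DCT step, or (c) invoke the generalized dominated convergence theorem with the $t$-dependent dominants $h_t(\sigma)=C(\sigma+\eta(t))^{\frac{N-1}{2}}|f(\sigma+\eta(t))|$, whose integrals converge to $\int h$. In the actual applications $f=\Erfc$ is nonincreasing, in which case $f(\sigma+\eta(t))\le f(\sigma)$ and your original majorant works; but the lemma as stated allows nonmonotone $f$, so the step must be repaired.
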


\begin{proof}
By the co-area formula and a simple change of variables, we have that
\begin{multline*}
\int_{B_R(x)}f\left(\frac{d_\Ga(z)}{\xi(t)}+\eta(t)\right)\,dz=\\
\int_0^{2R}f\left(\frac{s}{\xi(t)}+\eta(t)\right)\,\cH_{N-1}(\Ga_s\cap B_R(x))\,ds=\\
\xi(t)\,\int_{\eta(t)}^{\frac{2R}{\xi(t)}+\eta(t)}f(\si)\,\cH_{N-1}(\Ga_{\xi(t)[\si-\eta(t)]}\cap B_R(x))\,d\si.
\end{multline*}
Thus,
\begin{multline*}
\xi(t)^{-\frac{N+1}{2}}\,\int_{B_R(x)}f\left(\frac{d_\Ga(z)}{\xi(t)}+\eta(t)\right)\,dz=\\
\int_{\eta(t)}^{\frac{2R}{\xi(t)}+\eta(t)}\left[\si-\eta(t)\right]^\frac{N-1}{2}\!\!f(\si)\,\frac{\cH_{N-1}(\Ga_{\xi(t) [\si-\eta(t)]}\cap B_R(x))}{\xi(t)^\frac{N-1}{2} [\si-\eta(t)]^\frac{N-1}{2}}\,d\si.
\end{multline*}
Therefore, by taking the limit as $t\to 0^+$, we obtain \eqref{general-asymptotics} by Lemma \ref{lem:geometric-asymptotics} and the dominated convergence theorem, after observing that there are constants $c, \ol{t}>0$ such that
$$
[\si-\eta(t)]^\frac{N-1}{2}\le c\,(\si^\frac{N-1}{2}+1) \ \mbox{ for } \ \eta(t)\le\si<\infty
$$
and $0<t<\ol{t}$.
\end{proof}

We are now ready to prove our formula for the heat content of $u$.

\begin{thm}[Short-time asymptotics for heat content]
\label{th:asymptotics-heat-content-p}
Set $1<p\leq\infty$. Let $x\in\Om$ and assume that, for $R>0$, there exists $y_x\in\Ga$ such that $\ol{B_R(x)}\cap(\RR^N\setminus\Om)=\{y_x\}$ and that $\ka_j(y_x)<1/R$ for $j=1,\dots,N-1$.
\par
If $u$ is the bounded (viscosity) solution of \eqref{G-heat}-\eqref{boundary}, 
then it holds that
\begin{equation}
\label{asymptotics to the curvatures}
\lim_{t\to 0^+} t^{-\frac{N+1}{4}}\int_{B_R(x)}u(z,t)\,dz=\frac{c_N}{(p')^{\frac{N+1}{4}}} \frac{R^\frac{N-1}{2}}{\sqrt{\Pi_\Ga(y_x)}},
\end{equation}
where
$$
c_N=\frac{2^{\frac{N+3}{2}}\pi^\frac{N-1}{2}}{(N+1)\Ga\left(\frac{N+1}{4}\right)}
$$
and $\Ga(\cdot)$ denotes Euler's gamma function.
\end{thm}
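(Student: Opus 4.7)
The plan is to apply Corollary~\ref{cor:uniform} to replace $u(z,t)$ inside the integral by an explicit $\Erfc$-profile that depends only on $d_\Ga(z)$ and $t$, and then to evaluate the resulting integral with the geometric Lemma~\ref{lem:barrier-asimptotics}. Since $\Om$ is of class $C^2$, its boundary is locally Lipschitz, so one may take $\om(s)=Ls$ and the auxiliary function $\psi$ of Theorem~\ref{th:uniform} satisfies $\psi(t)\ge c\,t$ for small $t$. Accordingly, Corollary~\ref{cor:uniform}, applied on the compact set $\ol{B_R(x)}\subset\ol\Om$, produces $\rho(t)=O(t|\log t|)$ such that $|v(z,t)-d_\Ga(z)|\le\rho(t)$ for every $z\in\ol{B_R(x)}$; the monotonicity of $\Erfc$ yields the sandwich
\[
\Erfc\!\left(\tfrac{\sqrt{p'}(d_\Ga(z)+\rho(t))}{2\sqrt t}\right)\le u(z,t)\le\Erfc\!\left(\tfrac{\sqrt{p'}(d_\Ga(z)-\rho(t))}{2\sqrt t}\right).
\]

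I then apply Lemma~\ref{lem:barrier-asimptotics} to the lower bound with $f=\Erfc$, $\xi(t)=2\sqrt t/\sqrt{p'}$ and $\eta(t)=\sqrt{p'}\rho(t)/(2\sqrt t)>0$; the requirement $\eta(t)\to 0$, equivalent to $\rho(t)/\sqrt t\to 0$, is exactly where the $C^2$-regularity is genuinely used. For the upper bound the shift is negative, so I instead compare $\Erfc(s-\eta)$ to $\Erfc(s)$ via the elementary inequality $\Erfc(s-\eta)-\Erfc(s)\le(2\eta/\sqrt\pi)\,e^{-\max(0,s-\eta)^2}$, valid for all $s\in\RR$, $\eta>0$; a second application of Lemma~\ref{lem:barrier-asimptotics} to $f(s)=e^{-s^2}$ then shows that the discrepancy between the shifted and the unshifted profiles, after multiplication by $t^{-(N+1)/4}$, is $O(\eta(t))=o(1)$. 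Both sides of the sandwich therefore converge to the same limit, namely
\[
t^{-\frac{N+1}{4}}\int_{B_R(x)}\!\Erfc\!\left(\tfrac{\sqrt{p'}\,d_\Ga(z)}{2\sqrt t}\right)dz\longrightarrow\frac{2^{\frac{N+1}{2}}}{(p')^{\frac{N+1}{4}}}\cdot\frac{\om_{N-1}(2R)^{\frac{N-1}{2}}}{(N-1)\sqrt{\Pi_\Ga(y_x)}}\int_0^\infty\!s^{\frac{N-1}{2}}\Erfc(s)\,ds,
\]
after absorbing $\xi(t)^{-(N+1)/2}=(p')^{(N+1)/4}/(2^{(N+1)/2}t^{(N+1)/4})$.

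Identifying $c_N$ is a routine book-keeping. Integration by parts (with boundary terms vanishing at both endpoints) gives
\[
\int_0^\infty s^{\frac{N-1}{2}}\Erfc(s)\,ds=\frac{4}{(N+1)\sqrt\pi}\int_0^\infty s^{\frac{N+1}{2}}e^{-s^2}\,ds=\frac{2\,\Ga((N+3)/4)}{(N+1)\sqrt\pi}.
\]
Combining this with $\om_{N-1}=2\pi^{(N-1)/2}/\Ga((N-1)/2)$, the Legendre duplication formula $\Ga((N-1)/4)\Ga((N+1)/4)=2^{(3-N)/2}\sqrt\pi\,\Ga((N-1)/2)$, and the recurrence $\Ga((N+3)/4)=\tfrac{N-1}{4}\Ga((N-1)/4)$, the explicit constant collapses to the claimed $c_N/(p')^{(N+1)/4}$. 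The main obstacle is purely analytic: one must guarantee that the sandwich width is small on the natural diffusion scale $\sqrt t$, i.e.\ $\rho(t)/\sqrt t\to 0$, in order to pass to the limit under the integral. This is exactly what the $C^2$-regularity of $\Om$ furnishes through Corollary~\ref{cor:uniform}; the merely pointwise limit of Theorem~\ref{th:pointwise} would be insufficient for the dominated-convergence step hidden inside Lemma~\ref{lem:barrier-asimptotics}.
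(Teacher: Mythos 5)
The proposal reproduces the paper's own argument: sandwich $u(z,t)$ between two $\Erfc$ profiles via Corollary~\ref{cor:uniform} (which is where the $C^2$ regularity is used, as you correctly emphasize), apply Lemma~\ref{lem:barrier-asimptotics} with $\xi(t)=2\sqrt{t}/\sqrt{p'}$ and $f=\Erfc$, and then unwind the constant using the integral $\int_0^\infty s^{(N-1)/2}\Erfc(s)\,ds$ together with the duplication and recurrence formulas for $\Gamma$. Your value of the integral and the reduction of the explicit constant to $c_N/(p')^{(N+1)/4}$ are both correct (and in fact more carefully tracked than in the paper's text, where the shift is written as $\pm\eta_B(t)$ rather than $\pm\sqrt{p'}\,\eta_B(t)/2$; a harmless imprecision since only the decay of the shift matters).

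Where your write-up does genuinely more than the paper is in noticing that the upper barrier $\Erfc\bigl(d_\Ga(z)/\xi(t)-\eta(t)\bigr)$ carries a \emph{negative} shift, which falls outside the hypotheses of Lemma~\ref{lem:barrier-asimptotics} as stated ($\eta$ is required to map into $(0,\infty)$). The paper simply applies the lemma for both signs without comment. Your workaround — bounding $\Erfc(s-\eta)-\Erfc(s)$ by $\tfrac{2\eta}{\sqrt\pi}e^{-\max(0,s-\eta)^2}$ and showing the discrepancy is $O(\eta(t))$ — is sound in spirit, but as written it reintroduces the same difficulty: the argument $d_\Ga(z)/\xi(t)-\eta(t)$ in the exponential again has a negative shift, so invoking the lemma directly with $f(s)=e^{-s^2}$ is not yet licensed. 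The fix is short: for $\eta(t)\le 1/2$ one has $e^{-\max(0,\sigma-\eta(t))^2}\le e^{1/4}e^{-\sigma^2/4}$ for all $\sigma\ge 0$, a $t$-independent bound to which the lemma applies with $\eta\equiv 0$ (or with $\eta(t)=t$, to stay strictly within the letter of the hypothesis); alternatively, one can check that the proof of Lemma~\ref{lem:barrier-asimptotics} (co-area formula, the substitution $\si=s/\xi(t)+\eta(t)$, dominated convergence via Lemma~\ref{lem:geometric-asymptotics}) goes through unchanged for $\eta(t)\to 0^-$, which is what the paper implicitly relies on. With either patch your argument is complete and matches the paper.
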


\begin{proof}
Let $x\in\Om$ and $B=B_R(x)$. We set
\begin{equation}
\label{def-eta}
\eta_B(t)=\frac1{\sqrt{t}}\,\max_{z\in\ol{B}}|v(z,t)-d_\Ga(z)| \ \mbox{ for } \ t>0.
\end{equation}
Since we are assuming that $\Ga$ is of class $C^2$, Corollary \ref{cor:uniform}  implies the estimate
$$
d_\Ga(z)-\sqrt{t}\,\eta_B(t)\le v(z,t)\le d_\Ga(z)+\sqrt{t}\,\eta_B(t), \ x\in\ol{B},
$$
with $\eta_B(t)=O(\sqrt{t}\,\log t)$ as $t\to 0^+$. Thus, we infer that
$$
\Erfc\left(\sqrt{\frac{p'}{4t}}\,d_\Ga(z)+\eta_B(t)\right)<u(z,t)<\Erfc\left(\sqrt{\frac{p'}{4t}}\,d_\Ga(z)-\eta_B(t)\right),
$$
for any $(z,t)\in\ol{B}\times(0,\infty)$.
We then choose $\xi(t)=2\sqrt{t}/\sqrt{p'}$ and $f(s)=\Erfc(s)$, and check that the assumptions of Lemma \ref{lem:barrier-asimptotics} are satified.
\par
Thus, we compute that
$$
\int_0^\infty s^\frac{N-1}{2} \Erfc(s)\,ds=\frac{N-1}{2\sqrt{\pi}(N+1)}\,\Ga\left(\frac{N-1}{4}\right),
$$
and hence formula \eqref{asymptotics to the curvatures} then follows from Lemma \ref{lem:barrier-asimptotics}, after some straightforward computation.
 \end{proof}

\subsection{Asymptotics for $q$-means}

Formula \eqref{asymptotics to the curvatures} can also be seen as an asymptotic formula for the {\it mean value} of $u$ on the ball $B_R(x)$. In fact, the following scale invariant formula follows:
$$
\lim_{t\to 0^+} \left(\frac{R^2}{t}\right)^\frac{N+1}{4}\dashint_{B_R(x)}u(y,t)\,dy=\frac{c_N}{(p')^\frac{N+1}{4}\sqrt{\Pi_\Ga(y_x)}},
$$
with
$$
c_N=\frac{2^\frac{N+1}{2}}{\sqrt{\pi}}\,\frac{N}{N+1}\,\frac{\Ga\left(\frac{N}{2}\right)}{\Ga\left(\frac{N+1}{4}\right)}.
$$
\par
Other statistical quantities that seem to be particularly appropriate and interesting in a game-theoretic  context are the so-called {\it $q$-means}. We shall consider for $1<q\le\infty$ the $q$-mean $\mu_q^u(x,t)$ of $u(\cdot,t)$ on $B_R(x)$, as defined in \eqref{p-mean}; this coincides with the mean value when $q=2$.

\begin{lem}[Asymptotics for the $q$-mean of a barrier]
\label{lem:asymptotics-p-mean}
Set $1<q<\infty$, let $x\in\Om$, and assume that, for $R>0$, there exists a point $y_x\in\Ga$ such that $\ol{B_R(x)}\cap(\RR^N\setminus\Om)=\{y_x\}$ and $\ka_j(y_x)<1/R$ for $j=1,\dots,N-1$.
\par
Let $\xi$ and $\eta$ be functions satisfying the assumptions of Lemma \ref{lem:barrier-asimptotics}. For a non-negative, decreasing and continuous function $f$ on $\RR$ such that
$$
\int_0^\infty f(\si)^{q-1} \si^\frac{N-1}{2} d\si<\infty,
$$
set
$$
w(y,t)=f\left(\frac{d_\Ga(y)}{\xi(t)}+\eta(t)\right) \ \mbox{ for } \ (y,t)\in\ol{\Om}\times(0,\infty).
$$
\par
If $\mu_q^w(x,t)$ is the $q$-mean of $w$ on $B_R(x)$, 
then the following formula holds:
\begin{equation}
\label{asymptotics-p-mean-w}
\lim_{t\to 0^+} \left(\frac{R}{\xi(t)}\right)^\frac{N+1}{2(q-1)}\!\!\!\!\mu_q^w(x,t)=
\left\{\frac{c_N\,\int_0^\infty f(\si)^{q-1} \si^\frac{N-1}{2} d\si}{\sqrt{\Pi_\Ga(y_x)}}\,\right\}^\frac1{q-1} 
\end{equation}
where
$
c_N=2^{-\frac{N+1}{2}} N!\,\Ga\left(\frac{N+1}{2}\right)^{-2}.
$
\end{lem}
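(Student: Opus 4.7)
The $q$-mean $\mu:=\mu_q^w(x,t)$ for $1<q<\infty$ is characterized by the first-order optimality condition $\int_{B_R(x)}|w-\mu|^{q-2}(w-\mu)\,dy=0$, equivalently
$$
\int_{B_R(x)}(w-\mu)_+^{q-1}\,dy=\int_{B_R(x)}(\mu-w)_+^{q-1}\,dy.
$$
The strategy is to show that the left-hand side behaves like $A\,\xi(t)^{(N+1)/2}$, with
$$
A=\frac{\omega_{N-1}(2R)^{(N-1)/2}}{(N-1)\sqrt{\Pi_\Gamma(y_x)}}\,\int_0^\infty f(\sigma)^{q-1}\sigma^{\frac{N-1}{2}}d\sigma,
$$
while the right-hand side behaves like $\mu^{q-1}|B_R(x)|$, and then to solve for $\mu$. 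As a preliminary, $\mu\to 0$ as $t\to 0^+$: since $w(y,t)\to 0$ pointwise on the full-measure set $\{d_\Gamma>0\}$ and $0\le w\le f(0)$, dominated convergence gives $\|w\|_{L^q(B_R(x))}\to 0$, and the minimizing property of $\mu$ combined with the triangle inequality yields $\mu\le 2\|w\|_{L^q}/|B_R(x)|^{1/q}\to 0$.

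For the left-hand side, I would apply the co-area formula together with the change of variable $\tau=d_\Gamma(y)/\xi(t)+\eta(t)$ to obtain
$$
\xi(t)^{-\frac{N+1}{2}}\!\int_{B_R(x)}(w-\mu)_+^{q-1}dy=\int_\eta^{f^{-1}(\mu)}(f(\tau)-\mu)^{q-1}\frac{\mathcal H_{N-1}(\Gamma_{\xi(\tau-\eta)}\cap B_R(x))}{[\xi(\tau-\eta)]^{(N-1)/2}}(\tau-\eta)^{\frac{N-1}{2}}d\tau.
$$
By Lemma \ref{lem:geometric-asymptotics} the surface-measure ratio converges pointwise in $\tau$ to $\omega_{N-1}(2R)^{(N-1)/2}/[(N-1)\sqrt{\Pi_\Gamma(y_x)}]$ as $t\to 0^+$; since $(f(\tau)-\mu)_+^{q-1}\to f(\tau)^{q-1}$ and $f^{-1}(\mu)\to\infty$, a dominated convergence argument (with majorant $C\,f(\tau)^{q-1}\tau^{(N-1)/2}$ coming from the uniform bound $\mathcal H_{N-1}(\Gamma_s\cap B_R(x))\le C\,s^{(N-1)/2}$ on $(0,2R)$ that the $C^2$ regularity of $\Gamma$ supplies) delivers the target limit $A$.

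For the right-hand side, the bound $\int(\mu-w)_+^{q-1}dy\le\mu^{q-1}|B_R(x)|$ is trivial. For the matching lower bound, fix $\delta\in(0,1)$: since $\{w>\delta\mu\}=\{d_\Gamma<\xi(t)(f^{-1}(\delta\mu)-\eta(t))\}$, one has
$$
\int(\mu-w)_+^{q-1}dy\ge(1-\delta)^{q-1}\mu^{q-1}\bigl[|B_R(x)|-|\{d_\Gamma<\xi(t)(f^{-1}(\delta\mu)-\eta(t))\}|\bigr],
$$
so it is enough to prove $\xi(t)f^{-1}(\delta\mu)\to 0$. The integrability of $f(\sigma)^{q-1}\sigma^{(N-1)/2}$ combined with monotonicity of $f$ gives $f(\sigma)^{q-1}\sigma^{(N+1)/2}\to 0$, hence $f^{-1}(\mu)=o(\mu^{-2(q-1)/(N+1)})$; together with an a priori lower bound $\mu\gtrsim\xi(t)^{(N+1)/(2(q-1))}$ this delivers $\xi(t)f^{-1}(\delta\mu)\to 0$. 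The a priori bound itself comes from the elementary inequality $(w-\mu)_+^{q-1}\ge(1-1/K)^{q-1}w^{q-1}\mathbf 1_{\{w>K\mu\}}$, which, combined with the optimality identity, the trivial bound $\mathrm{RHS}\le\mu^{q-1}|B_R(x)|$ and the asymptotic $\int w^{q-1}dy\sim A\,\xi(t)^{(N+1)/2}$ (itself a direct application of Lemma \ref{lem:barrier-asimptotics} to $f^{q-1}$), gives $(1-1/K)^{q-1}[A\,\xi(t)^{(N+1)/2}-(K\mu)^{q-1}|B_R(x)|]\lesssim\mu^{q-1}|B_R(x)|$ for any fixed $K>1$, whence $\mu^{q-1}\gtrsim\xi(t)^{(N+1)/2}$.

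Equating the two asymptotics gives $\mu^{q-1}\sim A\,\xi(t)^{(N+1)/2}/|B_R(x)|$, so $(R/\xi(t))^{(N+1)/(2(q-1))}\mu\to(A R^{(N+1)/2}/|B_R(x)|)^{1/(q-1)}$. Substituting $|B_R(x)|=\pi^{N/2}R^N/\Gamma(N/2+1)$ and $\omega_{N-1}=2\pi^{(N-1)/2}/\Gamma((N-1)/2)$, and then applying the Legendre duplication formula $\Gamma((N-1)/2)\Gamma(N/2)=2^{2-N}\sqrt{\pi}\,(N-2)!$, collapses the prefactor to the claimed $[c_N\,I/\sqrt{\Pi_\Gamma(y_x)}]^{1/(q-1)}$ with $c_N=2^{-(N+1)/2}N!/\Gamma((N+1)/2)^2$. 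The main obstacle is the circularity in the right-hand side estimate (the measure decay $|\{w>\delta\mu\}|\to 0$ needs $\mu$ not to vanish too fast, while a useful lower bound on $\mu$ needs some grip on the RHS); this is unwound by first establishing the LHS asymptotic (which uses only $\mu\to 0$), extracting the a priori lower bound on $\mu$ from the trivial RHS upper bound, and only then completing the full RHS asymptotic.
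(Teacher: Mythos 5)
Your proof is correct and follows essentially the same strategy as the paper's: the optimality characterization $\int_{B_R(x)}(w-\mu)_+^{q-1}dy=\int_{B_R(x)}(\mu-w)_+^{q-1}dy$, the co-area formula plus Lemma~\ref{lem:geometric-asymptotics} for the left-hand side, the a~priori lower bound $\mu(t)\gtrsim\xi(t)^{(N+1)/(2(q-1))}$, and the conclusion $\mu(t)^{1-q}\int_{B_R(x)}(\mu-w)_+^{q-1}dy\to|B_R(x)|$. The right-hand side is handled with slightly different bookkeeping (you pass through the level set $\{w>\delta\mu\}$ and use a $K$-trick for the a~priori bound, whereas the paper obtains the a~priori bound by directly combining \eqref{characterization-mu} with \eqref{eq:left-hand side} and then applies dominated convergence to the co-area integrand), but the underlying mechanism --- monotonicity of $f$ and the integrability $\int_0^\infty f(\si)^{q-1}\si^{(N-1)/2}d\si<\infty$ forcing $f(\cdot/\xi(t)+\eta(t))/\mu(t)\to 0$ a.e.\ --- is the same.
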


\begin{proof}
We know from \cite{IMW} that $\mu(t)=\mu_q^w(x,t)$ is the unique root of the following equation
\begin{equation*}
\int_{B_R(x)}|w(y,t)-\mu(t)|^{q-2} [w(y,t)-\mu(t)]\, dy=0,
\end{equation*}
or
\begin{equation}
\label{characterization-mu}
\int_{B_R(x)}[w(y,t)-\mu(t)]_+^{q-1} dy=\int_{B_R(x)}[\mu(t)-w(y,t)]_+^{q-1} dy,
\end{equation}
where $[\si]_+=\max\{0,\si\}$.
\par
Firstly, we compute the short-time behavior of the left-hand side of \eqref{characterization-mu}. Let $\Ga_s=\{z\in B_R(x):d_\Ga(z)=s\}$. By the co-area formula, we get that
\begin{multline*}
  \int_{B_R(x)}[w(y,t)-\mu(t)]_+^{q-1} dy=\\
\int_{0}^{2R}\left[f\left(\frac{s}{\xi(t)}+\eta(t)\right)-\mu(t)\right]^{q-1}_+\cH_{N-1}\left(\Ga_s\right)\,ds.
\end{multline*}
\par
By the change of variable $s=\xi(t)\left[\si-\eta(t)\right]$, we obtain that
\begin{multline*}
  \int_{B_R(x)}[w(y,t)-\mu(t)]_+^{q-1} dy=\\
\xi(t)\int_{\eta(t)}^{\be(t)}\left[f\left(\si\right)-\mu(t)\right]^{q-1}_+\cH_{N-1}\left(\Ga_{\xi(t)\left[\si-\eta(t)\right]}\right)\,d\si,
\end{multline*}
where we set $\be(t)=\frac{2R}{\xi(t)}+\eta(t)$. 
\par 
Hence,
\begin{multline*}
  \xi(t)^{-\frac{N-1}{2}}\int_{B_R(x)}[w(y,t)-\mu(t)]_+^{q-1} dy=\\
\int_{\eta(t)}^{\be(t)}\frac{\cH_{N-1}\left(\Ga_{\xi(t)\left[\si-\eta(t)\right]}\right)}{\left\{\xi(t)\left[\si-\eta(t)\right]\right\}^{\frac{N-1}{2}}}\left[\si-\eta(t)\right]^{\frac{N-1}{2}}\left\{f(\si)-\mu(t)\right\}^{q-1}\,d\si.
\end{multline*}
Now, as $t\to 0^+$ we have that $\eta(t), \xi(t), \mu(t)\to 0$, $\be(t)\to \infty$ and that $\xi(t)\left[\si-\eta(t)\right]\to 0$ for almost every $\si\ge 0$. Thus, we can infer that
\begin{multline}
\label{eq:left-hand side}
  \lim_{t\to 0^+}
\xi(t)\int_{B_R(x)}[w(y,t)-\mu(t)]_+^{q-1} dy=\\
\frac{\om_{N-1}(2R)^{\frac{N-1}{2}}}{(N-1)\sqrt{\Pi_\Ga(y_x)}}\int_{0}^{\infty}f(\si)^{q-1}\si^{\frac{N-1}{2}}\,d\si,
\end{multline}
by Lemma \ref{lem:geometric-asymptotics} and an application of the dominated convergence theorem, as an inpection of the integrand function reveals.
\par 
Secondly, we treat the short-time behavior of the right-hand side of \eqref{characterization-mu}. By again performing the co-area formula and after some manipulations, we have that
\begin{multline}
\label{eq:right-hand side}
  \int_{B_R(x)}\left [\mu(t)-w(y,t)\right]^{q-1}_+=\\
\mu(t)^{q-1}\int_{0}^{2R}\left[1-f\left(\frac{s}{\xi(t)}+\eta(t)\right)\Big/\mu(t)\right]^{q-1}_+\cH_{N-1}\left(\Ga_s\right)\,ds
\end{multline}
which, on one hand,  leads to 
\begin{equation*}
    \int_{B_R(x)}\left [\mu(t)-w(y,t)\right]^{q-1}_+\le \mu(t)^{q-1}|B_R(x)|.
\end{equation*}
Notice in particular that, by using both \eqref{characterization-mu} and \eqref{eq:left-hand side}, the last inequality informs us that
$$
\mu(t)\ge c\,\xi(t)^{\frac{N+1}{2(q-1)}},
$$
for some positive constant $c$. Hence, after setting $\be(s,t)=\frac{s}{\xi(t)}+\eta(t)$, the assumptions on $f$ give the following chain of inequalities:
\begin{multline*}
  \int_{\be(s,t)/2}^{\infty}f(\si)^{q-1}\si^{\frac{N-1}{2}}\,d\si\ge\int_{\be(s,t)/2}^{\be(s,t)}f(\si)^{q-1}\si^{\frac{N-1}{2}}\,d\si\ge\\
\frac{2\left(1-2^{-\frac{N+1}{2}}\right)}{N+1}\frac{f\left(\be(s,t)\right)^{q-1}}{\xi(t)^{\frac{N+1}{2}}}\left[s+\eta(t)\,\xi(t)\right]^{\frac{N+1}{2}}\ge\\
\frac{2\left(1-2^{-\frac{N+1}{2}}\right)}{c(N+1)}\left[f\left(\frac{s}{\xi(t)}+\eta(t)\right)\Big/\mu(t)\right]^{q-1} \left[s+\eta(t)\,\xi(t)\right]^{\frac{N+1}{2}}.
\end{multline*}
Since, for almost every $s\ge 0$, the first term of the chain vanishes as $t\to 0^+$, we have that
\begin{equation*}
  \label{eq:pointwise-right-hand}
  \lim_{t\to 0^+}
\frac{f\left(\frac{s}{\xi(t)}+\eta(t)\right)}{\mu(t)}=0,
\end{equation*}
for almost every $s\ge 0$. Thus, \eqref{eq:right-hand side} gives at once that
\begin{equation}
\label{eq:right-hand side formula}
\lim_{t\to 0^+}   
\mu(t)^{1-q}\int_{B_R(x)}\left [\mu(t)-w(y,t)\right]^{q-1}_+=|B_R(x)|.
\end{equation}
\par 
Finally, \eqref{characterization-mu}, \eqref{eq:left-hand side} and \eqref{eq:right-hand side formula} tell us that
$$
\mu(t)^{q-1}=\xi(t)^\frac{N+1}{2} \frac{\om_{N-1}\,(2R)^{\frac{N-1}{2}}}{(N-1)\sqrt{\Pi_\Ga(y_x)}}\,\frac{\int_0^\infty f(\si)^{q-1}\,
\si^\frac{N-1}{2} d\si+o(1)}{|B_R(x)|+o(1)},
$$
that gives \eqref{asymptotics-p-mean-w}, after straightforward calculations involving Euler's gamma function.
\end{proof}

\begin{rem}
{\rm
If $q=\infty$, we know that
\begin{multline*}
\mu_\infty^w(x,t)=\frac12\,\left\{ \min_{\ol{B_R(x)}} w(\cdot,t)+\max_{\ol{B_R(x)}} w(\cdot,t)\right\}=\\
\frac12\,\left[f\left(\frac{\ol{d}}{\xi(t)}+\eta(t)\right)+f(\eta(t))\right],
\end{multline*}
where $\ol{d}$ is positive, being the maximum of $d_\Ga$ on $\ol{B_R(x)}$. Hence, it is easy to compute:
$$
\lim_{t\to 0^+} \mu_\infty^w(x,t)=\frac12\,f(0).
$$
Thus, formula \eqref{asymptotics-p-mean-w} does not extend continuously to the case $q=\infty$.
}
\end{rem}

\begin{thm}[Short-time asymptotics for $q$-means]
\label{th:asymptotics-p-mean}
Let $x\in\Om$, and assume that, for $R>0$, there exists a point $y_x\in\Ga$ such that $\ol{B_R(x)}\cap(\RR^N\setminus\Om)=\{y_x\}$ and $\ka_j(y_x)<1/R$ for $j=1,\dots,N-1$.
\par
Set $1<p\leq \infty$ and suppose that $u$ is the bounded (viscosity) solution of \eqref{G-heat}-\eqref{boundary} and, for $1<q\le\infty$, $\mu_q^u(x,t)$ is its $q$-mean on $B_R(x)$.
\par
Then, the following formulas hold:
\begin{equation}
\label{asymptotics-p-mean-u}
\lim_{t\to 0^+} \left(\frac{R^2}{t}\right)^\frac{N+1}{4(q-1)}\!\!\!\!\mu_q^u(x,t)=
\left\{\frac{c_N\,\int_0^\infty \Erfc(\si)^{q-1} \si^\frac{N-1}{2} d\si}{(p')^\frac{N+1}{4}\sqrt{\Pi_\Ga(y_x)}}\right\}^\frac1{q-1} ,
\end{equation}
where $c_N= N!\, \Ga\left(\frac{N+1}{2}\right)^{-2}$, and
$$
\lim_{t\to 0^+} \mu_\infty^u(x,t)=\frac12.
$$
\end{thm}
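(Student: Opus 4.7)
The plan is to sandwich $u$ between two explicit $\Erfc$-type barriers on $B_R(x)$ and apply Lemma \ref{lem:asymptotics-p-mean} to each of them. Since $\Ga$ is of class $C^2$, Corollary \ref{cor:uniform} gives $v(z,t)=d_\Ga(z)+O(t\log\psi(t))$ uniformly on the compact set $\ol{B_R(x)}\subset\ol{\Om}$. Because $\Erfc$ is strictly decreasing, this converts into the two-sided pointwise estimate
\[
w_-(z,t)\le u(z,t)\le w_+(z,t)\quad\text{on}\quad\ol{B_R(x)}\times(0,t_0),
\]
where $w_\pm(z,t)=\Erfc\!\left(d_\Ga(z)/\xi(t)+\eta_\pm(t)\right)$, $\xi(t)=2\sqrt{t}/\sqrt{p'}$, $\eta_-(t)>0>\eta_+(t)$, and $|\eta_\pm(t)|\to 0$ as $t\to 0^+$.

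For $1<q<\infty$, the monotonicity of the $q$-mean with respect to its argument (\cite{IMW}) immediately yields $\mu_q^{w_-}(x,t)\le\mu_q^u(x,t)\le\mu_q^{w_+}(x,t)$. I would then invoke Lemma \ref{lem:asymptotics-p-mean} with $f=\Erfc$ on both barriers. Formally, the lemma requires $\eta>0$, but inspection of its proof shows that only $\eta(t)\to 0$ is used: the change of variable $s=\xi(t)[\sigma-\eta(t)]$ and the subsequent dominated-convergence step are insensitive to the sign of $\eta(t)$, as the pointwise limit of the integrand is unchanged and the dominating function stays integrable. Both extremes therefore satisfy
\[
\lim_{t\to 0^+}\left(\frac{R}{\xi(t)}\right)^{\frac{N+1}{2(q-1)}}\mu_q^{w_\pm}(x,t)=\left\{\frac{2^{-(N+1)/2}\,N!\,\Ga\!\left(\tfrac{N+1}{2}\right)^{-2}\,I_q}{\sqrt{\Pi_\Ga(y_x)}}\right\}^{\frac{1}{q-1}},
\]
where $I_q=\int_0^\infty\Erfc(\si)^{q-1}\si^{(N-1)/2}\,d\si$.

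The remaining bookkeeping uses the identity $(R/\xi(t))^{(N+1)/(2(q-1))}=2^{-(N+1)/(2(q-1))}(p')^{(N+1)/(4(q-1))}(R^2/t)^{(N+1)/(4(q-1))}$. Absorbing the factor $2^{(N+1)/(2(q-1))}$ inside the braces (raised to $1/(q-1)$) cancels the $2^{-(N+1)/2}$ of the lemma's constant, producing $c_N=N!\,\Ga((N+1)/2)^{-2}$, while $(p')^{(N+1)/4}$ appears in the denominator inside the braces; the squeeze theorem then delivers \eqref{asymptotics-p-mean-u}. For $q=\infty$, the Remark preceding the statement, applied to $w_\pm$, gives
\[
\mu_\infty^{w_\pm}(x,t)=\tfrac{1}{2}\bigl[\Erfc(\bar d/\xi(t)+\eta_\pm(t))+\Erfc(\eta_\pm(t))\bigr]\to\tfrac{1}{2}[0+1]=\tfrac{1}{2},
\]
where $\bar d>0$ is the maximum of $d_\Ga$ on $\ol{B_R(x)}$; monotonicity of $\mu_\infty$ then squeezes $\mu_\infty^u(x,t)\to 1/2$. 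The principal technical point is the mild extension of Lemma \ref{lem:asymptotics-p-mean} to shifts of either sign; once that is granted, the remainder is essentially constant bookkeeping, with the $(p')^{-(N+1)/4}$ factor emerging naturally from the scaling $\xi(t)=2\sqrt{t}/\sqrt{p'}$.
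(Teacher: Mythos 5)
Your proposal is correct and follows essentially the same route as the paper: sandwich $u$ between two $\Erfc$-barriers derived from Corollary~\ref{cor:uniform}, use the monotonicity of $\mu_q$ from \cite{IMW}, and pass to the limit via Lemma~\ref{lem:asymptotics-p-mean} with $\xi(t)=2\sqrt{t/p'}$, $f=\Erfc$; your constant bookkeeping and the $q=\infty$ squeeze are also in order. The one point you flag explicitly — that Lemma~\ref{lem:asymptotics-p-mean} is stated for $\eta>0$ while the upper barrier requires a shift of the opposite sign — is a real technicality that the paper also relies on without comment (already in Theorem~\ref{th:asymptotics-heat-content-p} with Lemma~\ref{lem:barrier-asimptotics}), and your justification that only $\eta(t)\to 0$ is used in the change of variable and the dominated-convergence step is the right observation.
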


\begin{proof}
From \cite{IMW}  we know that the functional $u\mapsto \mu_q^u(x,t)$ is monotonically increasing, that is $\mu_q^u(x,t)\le\mu_q^w(x,t)$  if $u\le w$ almost everywhere in $B_R(x)$.
Thus, as done in the proof of Theorem \ref{th:asymptotics-heat-content-p}, the limit in \eqref{asymptotics-p-mean-u} will result from Lemma \ref{lem:asymptotics-p-mean}, where we choose:
$$
w(x,t)=\Erfc\left(\sqrt{\frac{p'}{4t}}\,d_\Ga(y)\pm\eta(t)\right),
$$
that is we choose $\xi(t)=\sqrt{4t/p'}$ and $\eta(t)$ is still given by \eqref{def-eta}.
Thus, \eqref{asymptotics-p-mean-u} will follow at once from \eqref{asymptotics-p-mean-w}, where
$f(\si)=\Erfc(\si)$.
\par
By the same argument, we also get the case $q=\infty$, since $f(0)=1$.
 \end{proof}

\begin{rem}{\rm
Notice that
$$
\left\{\int_0^\infty \Erfc(\si)^{q-1} \si^\frac{N-1}{2} d\si\right\}^\frac1{q-1}
$$
can be seen as the $(q-1)$-norm of $\Erfc$ in $(0,\infty)$ with respect to the weighed measure $\si^\frac{N+1}{2} d\si$.
}
\end{rem}

\subsection{Symmetry results}
\label{sec:symmetry}

In this subsection, we present two applications with geometric flavor of our asymptotic formulas \eqref{pointwise-limit} and \eqref{asymptotics-p-mean-u}.
\par
An immediate result is that, even in the case \eqref{G-heat}-\eqref{boundary}, a time-invariant level surface is parallel to $\Ga$.

\begin{thm}
\label{th:parallelilsm-invariant level set}
Let $\Om$ be a domain in $\RR^N$ satisfying $\Ga=\pa\left(\RR^N\setminus\ol\Om\right)$ and suppose that, for $1<p\le\infty$, $u$ is the solution of \eqref{G-heat}-\eqref{boundary}. 
\par 
If $\Si\subset\Om$ is a time-invariant level surface for $u$, then there exists $R>0$ such that 
\begin{equation}
\label{constant distance}
d_\Ga(x)=R\ \mbox{ for every }\ x\in\Si.
\end{equation}
\end{thm}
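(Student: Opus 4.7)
The plan is to apply the pointwise first-order asymptotic formula \eqref{pointwise-limit} from Theorem \ref{th:pointwise} directly. By the definition of a time-invariant level surface, there exists a function $a_\Si:(0,\infty)\to\RR$ such that
$$
u(x,t)=a_\Si(t) \ \mbox{ for all } \ (x,t)\in\Si\times(0,\infty).
$$
In particular, the quantity $4t\log u(x,t)=4t\log a_\Si(t)$ depends only on $t$, not on $x\in\Si$.

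On the other hand, since $\Om$ satisfies $\Ga=\pa(\RR^N\setminus\ol\Om)$, Theorem \ref{th:pointwise} applies and yields, for every $x\in\Si$,
$$
\lim_{t\to 0^+}4t\log a_\Si(t)=\lim_{t\to 0^+}4t\log u(x,t)=-p'\,d_\Ga(x)^2.
$$
The left-hand side is independent of $x$, so the right-hand side must be as well. Hence $d_\Ga$ is constant on $\Si$; denote this common value by $R$. Since $\Si\subset\Om$ (an open set whose boundary is $\Ga$), we have $d_\Ga(x)>0$ for every $x\in\Si$, and therefore $R>0$, giving \eqref{constant distance}.

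No significant obstacle is expected here: the statement is an immediate consequence of the pointwise convergence established earlier, once one observes that on a time-invariant level surface the left-hand side of \eqref{pointwise-limit} becomes a function of $t$ alone. The only mild care needed is to note that $\Si\subset\Om$ ensures the resulting constant distance is strictly positive, which is automatic from $\Ga=\pa(\RR^N\setminus\ol\Om)$ and the fact that $\Si$ lies in the open set $\Om$.
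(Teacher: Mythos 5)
Your proof is correct and follows essentially the same approach as the paper: take two (or arbitrary) points on $\Si$, use the fact that $u$ is constant on $\Si$ at each fixed time so the quantity $4t\log u(x,t)$ is independent of $x\in\Si$, and invoke the pointwise asymptotic formula to conclude $d_\Ga$ is constant on $\Si$. Minor note: the paper's own proof cites Theorem \ref{Asymptotic in the ball}, which appears to be a typo for the general-domain result Theorem \ref{th:pointwise} that you correctly invoke.
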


\begin{proof}
For any choice of $x_1$ and $x_2\in\Si$, we have that $u(x_1,t)=u(x_2,t)$ and hence $4t\,\log u(x_1,t)=4t\,\log u(x_2,t)$ for every $t>0$. By Theorem \ref{Asymptotic in the ball},
we infer that $d_\Ga(x_1)=d_\Ga(x_2)$ and hence we obtain our claim.
\end{proof}

As an application of \eqref{asymptotics-p-mean-u}, we give a characterization of spheres, based on $q$-means, in the spirit of
\cite[Theorem 1.2]{MS-PRSE}. This result is new, even for the case $p=2$.

\begin{thm}
\label{th:radiality-bounded domain}
Set $1<p\le\infty$ and let $\Om$ be a domain of class $C^2$ with bounded and connected boundary $\Ga$. 
Let $u$ be the bounded (viscosity) solution of \eqref{G-heat}-\eqref{boundary}. 
\par 
Suppose that $\Si$ is a $C^2$-regular surface in $\Om$, that is a parallel surface to $\Ga$ at distance $R>0$.\par
If, for some $1<q<\infty$ and every $t>0$, the function
$$
\Si\ni x\mapsto \mu_q^u (x,t)
$$
is constant, then $\Ga$ must be a sphere.
\end{thm}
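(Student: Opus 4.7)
The plan is to use the asymptotic formula \eqref{asymptotics-p-mean-u} to translate the hypothesis into the Weingarten-type identity $\Pi_\Ga\equiv$ const on $\Ga$, and then invoke the classical characterization of spheres mentioned around \eqref{weingarten}.

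First, I would verify that Theorem \ref{th:asymptotics-p-mean} applies at every $x\in\Si$. Since $\Si$ is a $C^2$-regular surface parallel to $\Ga$ at distance $R$, each $x\in\Si$ is of the form $x=y+R\,\nu(y)$ for a unique $y=y_x\in\Ga$ (with $\nu$ the interior unit normal to $\Ga$), and the $C^2$-regularity of $\Si$ is equivalent to the strict inequalities $R\,\ka_j(y_x)<1$ for $j=1,\dots,N-1$. In particular $\overline{B_R(x)}\subset\overline{\Om}$ and $\overline{B_R(x)}\cap(\RR^N\setminus\Om)=\{y_x\}$, so the hypotheses of Theorem \ref{th:asymptotics-p-mean} are met uniformly on $\Si$.

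Next, by assumption $\mu_q^u(x,t)$ is independent of $x\in\Si$ for every $t>0$. Multiplying by the $x$-independent normalization $(R^2/t)^{\frac{N+1}{4(q-1)}}$ and passing to the limit $t\to 0^+$, formula \eqref{asymptotics-p-mean-u} shows that the only remaining $x$-dependence on the right-hand side, namely through $\Pi_\Ga(y_x)$, must be trivial; hence $\Pi_\Ga(y_x)$ is constant as $x$ ranges over $\Si$. Because $\Ga$ is connected and $R$ is smaller than the reach of $\Ga$ (by the strict inequalities above), the normal map $y\mapsto y+R\,\nu(y)$ is a $C^1$-diffeomorphism from $\Ga$ onto $\Si$; hence the correspondence $x\mapsto y_x$ is surjective, and $\Pi_\Ga$ is a positive constant on the whole of $\Ga$.

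The last step, and the only one of substantive geometric content, is to conclude from the Weingarten identity
$$
\prod_{j=1}^{N-1}\bigl(1-R\,\ka_j(y)\bigr)\equiv \mbox{const} \ \mbox{ on } \ \Ga,
$$
subject to $R\,\ka_j<1$, that the bounded, connected, $C^2$ hypersurface $\Ga$ must be a sphere. This is exactly the symmetry principle recalled around \eqref{weingarten} in the Introduction and employed in \cite{MS-PRSE}, proved by an Alexandrov-type moving-plane argument, and it can be quoted wholesale. This third step is the main obstacle in content, but it is entirely independent of $p$ and is borrowed directly from the earlier theory; the novelty of the present theorem lies in the first two steps, which reduce the game-theoretic and $q$-mean statement to the same classical geometric condition.
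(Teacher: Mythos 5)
Your proof is correct and follows essentially the same route as the paper's: apply the $q$-mean asymptotics of Theorem \ref{th:asymptotics-p-mean} at each $x\in\Si$, use the constancy hypothesis to conclude $\Pi_\Ga\equiv\mathrm{const}$ on $\Ga$, and then quote the Alexandrov-type characterization of the sphere. You spell out the verification of the hypotheses of Theorem \ref{th:asymptotics-p-mean} and the surjectivity of the normal map $\Ga\to\Si$ in somewhat more detail than the paper, but the logical skeleton is identical.
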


\begin{proof}
Since $\Si$ is of class $C^2$ and is parallel to $\Ga$, for every $y\in\Ga$, there is a unique $x\in\Si$ at distance $R$ from $y$.  Thus, owing to Theorem \ref{th:asymptotics-p-mean}, we can infer that 
$$
\Pi_\Ga=\mbox{constant on $\Ga$.}
$$ 
Our claim then follows from a  variant of Alexandrov's Soap Bubble Theorem (see \cite{Al}),  \cite[Theorem 1.2]{MS-PRSE}, or \cite[Theorem 1.1]{MS-AM}.  
\end{proof}

\appendix
\section{Viscosity solutions}

For the reader's convenience, in this appendix, we collect the salient results on the viscosity solutions of \eqref{G-heat} and \eqref{G-elliptic} that we use in this paper. The main references to the existing literature that we use are the classical treatises \cite{CIL} and \cite{CGG}, the papers \cite{DaL} and \cite{GGIS}, concerning useful versions of comparison and maximum principles for nonlinear parabolic equations, in addition to the more recent and specific works \cite{AP}, \cite{APR}, \cite{BG-IUMJ}, \cite{BG-CPAA}, and \cite{Do} on $\De_p^G$.

Equation \eqref{G-heat} can be formally written as
$$
u_t=F(\na u, \na^2 u),
$$
where
$$
F(\xi,X)=\tr[A(\xi) X], \ \mbox{ with } \ A(\xi)=\frac1{p}\,I+(1-2/p)\,\frac{\xi\otimes \xi}{|\xi|^2}
$$
for $\xi\in\RR^N\setminus\{0\}$.
Here, $I$ and $X$ denote the $N\times N$ identity and  a real symmetric matrix. 
\par
Since $F$ has a bounded discontinuity at $\xi=0$, by following \cite{CIL}, we need to consider the so-called lower and upper semi-continuos relaxations $F_*$ and $F^*$ of $F$, respectively defined as
$$
F_*(\xi, X)=\lim_{r\to 0^+} \inf\{F(\eta,Y): 0<|\eta-\xi|, |Y-X|<r\}, 
$$
and $F^*=-(-F)_*$.
\par
If $\xi\neq 0$, $F=F_*=F^*$ while, if $\xi=0$, we have that
\begin{eqnarray*}
&&p\,F_*(0,X)=\tr(X)+\max(p-2,0)\,\la(X)+\min(p-2,0)\,\La(X),\\
&&p\,F^*(0,X)=\tr(X)+\min(p-2,0)\,\la(X)+\max(p-2,0)\,\La(X),
\end{eqnarray*}
where $\la(X)$ and $\La(X)$ are the maximum and minimum eigenvalue of $X$  (see \cite{Do}, \cite{AP}).
For $\xi\neq 0$, $F$ is a linear operator in the variable $X$ that is \emph{uniformly elliptic}, since
$$
\min(1/p',1/p)\,I\leq A(\xi)\leq\max(1/p',1/p)\,I.
$$

\medskip

\subsection{Parabolic case}

We say that $u\in C\left(\Om\times\left(0,\infty\right)\right)$ is a viscosity solution of \eqref{G-heat} if, for every $(x,t)\in\Om\times\left(0,\infty\right)$, both of the following requirements are fulfilled:
\begin{enumerate}[(i)]
\item for every $\fhi\in C^2\left(\Om\times\left(0,\infty\right)\right)$ such that $u-\fhi$ attains its maximum at $(x,t)$, then
\begin{equation*}
\fhi_t(x,t)-F^*\left(\na\fhi(x,t),\na^2\fhi(x,t)\right)\leq 0;
\end{equation*}

\item for every $\fhi\in C^2\left(\Om\times\left(0,\infty\right)\right)$ such that $u-\fhi$ attains its minimum at $(x,t)$, then
\begin{equation*}
\fhi_t(x,t)-F_*\left(\na\fhi(x,t),\na^2\fhi(x,t)\right)\geq 0.
\end{equation*}

\end{enumerate}   

A quite general comparison principle holds for equation \eqref{G-heat} (even for unbounded domains).

\begin{thm}[\cite{GGIS}, Theorem 2.1]
\label{parabolic cp}
Let $u$ and $v$ be a viscosity subsolution and supersolution for \eqref{G-heat}, that are bounded in $\Om\times(0,\infty)$. 
\par
If $u\leq v$ on $\Ga\times(0,\infty)$ and $\ol{\Om}\times\{0\}$, then
$$
u\leq v \ \mbox{ on } \ \ol{\Om}\times [0,\infty).
$$
\end{thm}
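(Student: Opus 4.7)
The plan is to adapt Ishii's standard doubling-of-variables technique for viscosity solutions of degenerate parabolic equations, being careful about the singularity of $F$ at $\xi=0$ and about the unboundedness of $\Omega$.

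First, I would fix an arbitrary $T>0$ and work on $\ol{\Omega}\times[0,T]$, replacing $u$ by $u-\eta/(T-t)$ for small $\eta>0$ to ensure any positive supremum of $u-v$ is attained at an interior time and to produce a strict supersolution margin. Assuming for contradiction that $\sup_{\ol{\Omega}\times[0,T]}(u-v)>0$, the boundary and initial inequalities force the supremum to lie in $\Omega\times(0,T)$. To cope with the fact that $\Omega$ may be unbounded, I would add a weight of the form $\delta\,\chi(|x|)$ (with $\chi$ an increasing function growing mildly at infinity) to localize the maximum in a compact set; since $u,v$ are bounded, letting $\delta\to 0^+$ at the end is harmless.

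Next, I would double variables and penalize:
\begin{equation*}
\Phi_\varepsilon(x,y,t)=u(x,t)-v(y,t)-\frac{|x-y|^4}{4\varepsilon}-\delta\bigl(\chi(|x|)+\chi(|y|)\bigr)-\frac{\eta}{T-t},
\end{equation*}
and let $(x_\varepsilon,y_\varepsilon,t_\varepsilon)$ be a maximizer. Standard arguments give $|x_\varepsilon-y_\varepsilon|^4/\varepsilon\to 0$ and $t_\varepsilon$ staying bounded away from $0$ and $T$. The quartic (rather than quadratic) penalty is the crucial choice: the doubling gradient $\xi_\varepsilon:=|x_\varepsilon-y_\varepsilon|^2(x_\varepsilon-y_\varepsilon)/\varepsilon$ is what will be tested against $F_*$ and $F^*$, and the extra factor $|x_\varepsilon-y_\varepsilon|^2$ ensures we can take limits through the discontinuity of $F$ at $\xi=0$ without trouble.

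Applying the parabolic Crandall--Ishii lemma at $(x_\varepsilon,y_\varepsilon,t_\varepsilon)$ then yields $a,b\in\RR$ with $a-b=\eta/(T-t_\varepsilon)^2$ and symmetric matrices $X_\varepsilon,Y_\varepsilon$ satisfying the usual one-sided matrix inequality $X_\varepsilon\le Y_\varepsilon+o(1)$ as $\varepsilon\to 0^+$. Writing the viscosity sub/super-solution inequalities for $u$ at $(x_\varepsilon,t_\varepsilon)$ and $v$ at $(y_\varepsilon,t_\varepsilon)$, I would subtract to obtain
\begin{equation*}
\frac{\eta}{(T-t_\varepsilon)^2}\le F^*(\xi_\varepsilon+\delta\,\nabla\chi,X_\varepsilon)-F_*(\xi_\varepsilon-\delta\,\nabla\chi,Y_\varepsilon)+O(\delta).
\end{equation*}
Since $F$ is the trace of $A(\xi)$ applied to the Hessian and $A(\xi)$ is uniformly elliptic with bounds $\min(1/p',1/p)\,I\le A(\xi)\le\max(1/p',1/p)\,I$, the right-hand side is $\le o(1)$ either by the ellipticity inequality $\tr[A(\xi_\varepsilon)(X_\varepsilon-Y_\varepsilon)]\to 0$ when $\xi_\varepsilon\neq 0$, or by the formulas for $F_*(0,\cdot)$ and $F^*(0,\cdot)$ together with the matrix inequality and the sandwiching of the largest/smallest eigenvalues of $X_\varepsilon-Y_\varepsilon$ when $\xi_\varepsilon\to 0$. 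Letting $\varepsilon\to 0^+$ then $\delta\to 0^+$ contradicts $\eta>0$.

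The main obstacle I expect is precisely the final step: passing to the limit through the discontinuity of $F$ at $\xi=0$. If $\xi_\varepsilon\not\to 0$ along a subsequence, uniform ellipticity gives the contradiction routinely; the delicate case is $\xi_\varepsilon\to 0$, where one must use the two-sided bounds on $p\,F_*(0,\cdot)$ and $p\,F^*(0,\cdot)$ in terms of the extremal eigenvalues, combined with the Ishii matrix inequality (which forces those eigenvalues of $X_\varepsilon-Y_\varepsilon$ to vanish). The quartic penalty is what makes this manageable, since it guarantees the error terms coming from $\nabla_{x,y}(|x-y|^4/4\varepsilon)$ in the matrix inequality are controlled by $|x_\varepsilon-y_\varepsilon|^2$, which is $o(1)$ uniformly.
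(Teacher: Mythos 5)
The paper does not prove this theorem: it is quoted as an external result, attributed to \cite{GGIS}, Theorem 2.1, and used as a black box throughout. So there is no ``paper proof'' to compare your sketch against; the relevant comparison is with the argument in the cited reference. Your plan follows the same general strategy as that source: doubling of variables on $\ol{\Om}\times[0,T]$ with a quartic penalty $|x-y|^4/(4\ve)$ to cope with the discontinuity of $F$ at $\xi=0$, a localization weight to tame the unboundedness of $\Om$, and the parabolic Crandall--Ishii lemma to produce matrices. This is the right skeleton.

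That said, your heuristic for why the quartic penalty works is not quite right, and it hides the actual mechanism. You write that the factor $|x_\ve-y_\ve|^2$ ``ensures we can take limits through the discontinuity of $F$ at $\xi=0$,'' but in fact $\xi_\ve=|x_\ve-y_\ve|^2(x_\ve-y_\ve)/\ve$ need not tend to $0$, and the point is not to make the gradient small. The quartic (as opposed to quadratic) penalty produces a clean dichotomy at the maximizer: either $x_\ve=y_\ve$, in which case both $\xi_\ve=0$ and the penalty Hessian $D^2\left(\tfrac{|x-y|^4}{4\ve}\right)$ vanish, so the matrix inequality reduces to $X_\ve\le Y_\ve$ (up to the localization terms) and the degenerate relaxations $F^*(0,\cdot)$, $F_*(0,\cdot)$ are applied to matrices where the inequality is trivially respected; or $x_\ve\neq y_\ve$, in which case $\xi_\ve\neq 0$, $F=F_*=F^*$ at the relevant gradients, and uniform ellipticity applies. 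This dichotomy, rather than any ``passage through the singularity,'' is what makes the argument close. Related to this, the matrices that appear in the viscosity inequalities carry gradients $\xi_\ve\pm\delta\nabla\chi$, and since the map $\xi\mapsto A(\xi)$ is not continuous at $\xi=0$, the $O(\delta)$ error you write down is not automatic when $\xi_\ve$ is small compared with $\delta\nabla\chi$; controlling this is exactly where the careful structural hypotheses of \cite{GGIS} enter, and it is the genuinely delicate step on unbounded domains. So the plan is sound but the justification of the crucial limit step needs to be replaced by the dichotomy argument just described.
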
 

\begin{rem}
\label{maximum principle}
{\rm
Let $u$ be the (bounded) viscosity solution of \eqref{G-heat}-\eqref{boundary} in $\Om\times (0,\infty)$.  Theorem \ref{parabolic cp} and an application of the strong maximum principle for $u$ (see \cite[Corollary 2.4]{DaL}), then implies that
$$
0< u< 1, \ \mbox{ for } \ (x,t)\in\Om\times(0,\infty).
$$
Indeed, the constant functions $v\equiv 0$ and $w\equiv 1$ on $\Om\times(0,\infty)$ are solutions of  \eqref{G-heat} and $v\leq u\leq w$ on $\pa \Om\times(0,\infty)$ and $\ol{\Om}\times\{0\}$.
}
\end{rem}

\begin{rem}
{\rm
\label{viscosity consistency}
Since $F$, $F^*$, and $F_*$ are monotonic in $X$ (in the sense of \cite{CIL}), we have that any smooth function $u$ that satisfies (i) and (ii), in the classical sense, is a viscosity solution of \eqref{G-heat}.
}
\end{rem}
\par 

\begin{rem}
\label{existence of solutions}
{\rm
The existence of a solution of \eqref{G-heat}-\eqref{boundary} can be obtained as follows. Suppose that $\Ga$ is smooth enough, say $C^2$, and let $S$ be the parabolic boundary of $\Om\times(0,\infty)$, 
that is
$$
S=(\Ga\times(0,\infty))\cup(\ol\Om\times\{0\}).
$$
For any $g\in C^0\left(S\right)$, there exists a unique viscosity solution $u$ of \eqref{G-heat}, satisfying 
$u=g$ on $S$, as shown in \cite[Theorem 1.9]{MPR-SJMA} or \cite[Theorem 2.6]{BG-CPAA}.
\par 
Now, for any $n\in\NN$, let $g_n$ be a continuous function on $S$ such that $0\le g_n\le 1$ and
\begin{eqnarray*}
&&g_n=1\ \mbox{ on }\ \{(x,t)\in S:t>1/n\},\\
&&g_n=0\ \mbox{ on }\ \{(x,0):d_\Ga(x)>1/n\}.
\end{eqnarray*}
The solution $u_n$ of \eqref{G-heat} satisfying $u_n=g_n$ on $S$ is such that
 $0\leq u_n\leq 1$ on $\ol\Om\times[0,\infty)$, by Theorem \ref{parabolic cp}.
Thus, by virtue of uniform H\"older estimates (see \cite[Lemma 2.11]{BG-CPAA}), up to a subsequence, $u_n$ converges (locally uniformly) in $\Om\times(0,\infty)$ to a solution $u$ of \eqref{G-heat}. Moreover, by the choice of $g_n$, we have that $u$ satisfies the initial-boundary conditions \eqref{initial} and \eqref{boundary}. 
\par
By a result in \cite{AP}, we also know that $u\in C_{loc}^{1+\tau,\frac{1+\tau}{2}}\left(\Om\times(0,\infty)\right)$, for some $\tau>0$.
}
\end{rem}

\subsection{Elliptic case}
In the (non-homogeneous) elliptic case, the definition of viscosity solution is analogous (see \cite{CIL}, \cite{APR}). In fact, we say that a continuous function $u$ in $\Om$ is a viscosity solution of  \eqref{G-elliptic}, if for every $x\in\Om$, both of the following properties are satisfied:

\begin{enumerate}
\item[(iii)] for every $\fhi\in C^2\left(\Om\right)$ such that $u-\fhi$ attains its maximum at $x$, then
\begin{equation*}
\label{sub-solution}
\fhi(x)-\ve^2 F^*\left(\na\fhi(x),\na^2\fhi(x)\right)\leq 0;
\end{equation*}
\item[(iv)] for every $\fhi\in C^2\left(\Om\right)$ such that $u-\fhi$ attains its minimum at $x$, then
\begin{equation*}
\label{sub-solution}
\fhi(x)-\ve^2 F_*\left(\na\fhi(x),\na^2\fhi(x)\right)\geq 0.
\end{equation*}
\end{enumerate} 

A weak comparison principle is available for \eqref{G-elliptic}.

\begin{lem}[\cite{APR}, Appendix D]
\label{elliptic cp}
Let $u$ and $v$ be a viscosity sub-solution and a viscosity super-solution for \eqref{G-elliptic}. 
\par
If $u\leq v$ on $\pa\Om$, then
$
u\leq v
$
on $\ol{\Om}$.
\end{lem}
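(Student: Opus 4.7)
The plan is to run a standard viscosity comparison argument by doubling variables, applying the Crandall--Ishii lemma, and exploiting the properness provided by the zeroth-order term $u$ in \eqref{G-elliptic}.

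First I would argue by contradiction, assuming $M := \max_{\ol\Om}(u - v) > 0$. Since $u \le v$ on $\pa\Om$, this maximum is attained at some interior point $\hat{x} \in \Om$. For $\al > 0$, I would introduce the doubled function
$$
\Phi_\al(x,y) = u(x) - v(y) - \tfrac{\al}{2}|x-y|^2
$$
on $\ol\Om \times \ol\Om$, and let $(x_\al, y_\al)$ be a maximizer. The standard penalization lemma (\cite[Proposition 3.7]{CIL}) then gives $x_\al, y_\al \to \hat{x}$, $\al |x_\al - y_\al|^2 \to 0$, and $u(x_\al) - v(y_\al) \to M$, so the pair is interior for all sufficiently large $\al$.

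Next, the Crandall--Ishii lemma (\cite[Theorem 3.2]{CIL}) produces symmetric matrices $X_\al, Y_\al$ with
$$
\begin{pmatrix} X_\al & 0 \\ 0 & -Y_\al \end{pmatrix} \le 3\al \begin{pmatrix} I & -I \\ -I & I \end{pmatrix},
$$
which in particular forces $X_\al \le Y_\al$. Setting $\xi_\al = \al(x_\al - y_\al)$, the viscosity definitions (iii) and (iv) yield
$$
u(x_\al) - \ve^2 F^*(\xi_\al, X_\al) \le 0 \le v(y_\al) - \ve^2 F_*(\xi_\al, Y_\al),
$$
and subtracting gives
$$
u(x_\al) - v(y_\al) \le \ve^2 \bigl[ F^*(\xi_\al, X_\al) - F_*(\xi_\al, Y_\al) \bigr].
$$

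It remains to show the bracket is nonpositive. When $\xi_\al \ne 0$, both envelopes collapse to $F$, which is linear in $X$ and uniformly elliptic, so $X_\al \le Y_\al$ immediately gives $F(\xi_\al, X_\al) \le F(\xi_\al, Y_\al)$. When $\xi_\al = 0$, I would instead use the explicit trace-plus-extremal-eigenvalue formulas for $F^*(0,\cdot)$ and $F_*(0,\cdot)$ recorded in the appendix, together with the full Crandall--Ishii matrix inequality (not merely $X_\al \le Y_\al$), to verify that the extremal-eigenvalue corrections preserve the bound after accounting for the sign of $p-2$. Either way the bracket is $\le 0$, so $u(x_\al) - v(y_\al) \le 0$ for large $\al$, contradicting the fact that this quantity tends to $M > 0$.

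The main obstacle is precisely this degenerate case $\xi_\al = 0$: since $F^* \ge F_*$ at the origin by construction, the mere monotonicity $X_\al \le Y_\al$ is not enough, and one has to invoke the finer matrix inequality together with the eigenvalue structure of the envelopes. This is the delicate computation carried out in \cite{APR}, and is the reason this lemma is imported rather than reproved here.
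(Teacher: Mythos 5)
The paper does not prove Lemma \ref{elliptic cp}; it is imported verbatim from \cite[Appendix D]{APR}, so there is no in-paper proof to compare against. Your sketch runs the standard comparison argument correctly in the regular case $\xi_\al\neq 0$, and correctly identifies the zeroth-order term as the source of properness. However, it contains a genuine gap at the degenerate case $\xi_\al=0$, which you flag but then too quickly wave away.

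Your claim that the ``full Crandall--Ishii matrix inequality together with the eigenvalue structure of the envelopes'' closes the gap is false for the quadratic penalization you chose. With $\phi=\tfrac{\al}{2}|x-y|^2$, the theorem on sums yields only
\[
\begin{pmatrix} X_\al & 0 \\ 0 & -Y_\al \end{pmatrix}\le 3\al\begin{pmatrix} I & -I \\ -I & I \end{pmatrix},
\]
and this inequality is satisfied, for instance, by $X_\al=Y_\al=\mathrm{diag}(1,-1,0,\dots,0)$ as soon as $\al\ge 1/3$. For this choice, and for $p>2$, the explicit envelope formulas give $\tr X_\al=\tr Y_\al=0$ and
\[
F^*(0,X_\al)-F_*(0,Y_\al)=\tfrac{p-2}{p}\bigl[\la_{\max}(X_\al)-\la_{\min}(Y_\al)\bigr]=\tfrac{2(p-2)}{p}>0,
\]
so the bracket is strictly positive rather than nonpositive, and the argument as written does not close.

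The standard repair is to replace the quadratic penalization by a quartic one, $\phi=\tfrac{\al}{4}|x-y|^4$. Then in the degenerate case $x_\al=y_\al$ the full Hessian $D^2\phi$ vanishes at $(x_\al,y_\al)$, and the theorem on sums \cite[Theorem 3.2]{CIL} forces the much stronger constraint $X_\al\le 0\le Y_\al$. Substituting into the explicit formulas for $F^*(0,\cdot)$ and $F_*(0,\cdot)$ and separating the cases $p\ge 2$ and $1<p<2$ (using only that $p-1>0$) gives $F^*(0,X_\al)\le 0\le F_*(0,Y_\al)$, after which the contradiction is reached exactly as in the nondegenerate case. Without this change of penalization, the sketch has a hole precisely where the nonlinearity of $\De^G_p$ bites.
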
 

We conclude this appendix with the following useful lemma.

\begin{lem}[Extension lemma]
\label{regular viscosity solutions}
Let $\Om$ be an open set and let $u\in C^2(\Om)$. Suppose that 
\begin{enumerate}[(i)]
\item
$x_0\in\Om$ is the unique critical point for $u$ in $\Om$;
\item
 $u$ is solution of \eqref{G-elliptic} in $\Om\setminus \{x_0\}$.
\end{enumerate}
\par
Then $u$ is a solution of \eqref{G-elliptic} in $\Om$.
\end{lem}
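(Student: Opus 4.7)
The plan is to reduce to checking the viscosity inequalities only at the exceptional point $x_0$, and to exploit that $u \in C^2(\Om)$ together with the classical equation in the punctured neighbourhood to push the information through the semi-continuous relaxations $F_*$ and $F^*$.

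On $\Om \setminus \{x_0\}$ the function $u$ is $C^2$ and classically satisfies \eqref{G-elliptic}; since $F$, $F^*$, $F_*$ are degenerate-elliptic, the elliptic counterpart of Remark~\ref{viscosity consistency} gives at once that $u$ is a viscosity solution there. Hence it suffices to verify the viscosity conditions (iii) and (iv) at $x = x_0$. Fix $\phi \in C^2(\Om)$ with $u-\phi$ attaining a maximum at $x_0$ (the minimum case is symmetric); since $u$ itself is $C^2$ and $\na u(x_0)=0$, elementary calculus yields $\na \phi(x_0)=\na u(x_0)=0$ and $\na^2 \phi(x_0) \ge \na^2 u(x_0)$.

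Next, pick any sequence $x_n \to x_0$ with $x_n \ne x_0$. By hypothesis (i), $\na u(x_n)\neq 0$, so the classical identity $u(x_n) = \ve^2 F(\na u(x_n), \na^2 u(x_n))$ holds along the sequence. Letting $n\to\infty$, continuity of $u$ and $\na^2 u$ combined with $\na u(x_n)\to 0$ gives, straight from the definitions of $F_*$ and $F^*$, the sandwich
\[
F_*(0,\na^2 u(x_0)) \;\le\; \frac{u(x_0)}{\ve^2} \;\le\; F^*(0,\na^2 u(x_0)).
\]

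To conclude I transfer the Hessian from $\na^2 u(x_0)$ to $\na^2 \phi(x_0)$. By monotonicity of $F^*$ in $X$ (valid also at the degenerate direction $\xi=0$, as visible from the explicit formulas, each $pF^*(0,X)$ and $pF_*(0,X)$ being a positive linear combination of the ordered eigenvalues of $X$), one has $F^*(0,\na^2 u(x_0)) \le F^*(0,\na^2 \phi(x_0))$, so $u(x_0)-\ve^2 F^*(0,\na^2\phi(x_0)) \le 0$, which is exactly condition (iii). The minimum case is identical with inequalities reversed and $F_*$ replacing $F^*$. The only real subtlety of the argument is the limit passage through the discontinuous nonlinearity at $\xi=0$; once the lower/upper semicontinuous relaxations absorb the ambiguity at $x_0$, the rest of the proof is a short monotonicity check.
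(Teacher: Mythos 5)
Your proof is correct and follows essentially the same strategy as the paper's: both take a sequence of nearby points $x_n\to x_0$ where the classical equation holds, pass to the limit to deduce the two inequalities $F_*(0,\nabla^2 u(x_0))\le u(x_0)/\ve^2\le F^*(0,\nabla^2 u(x_0))$, and then transfer the Hessian from $\nabla^2 u$ to the test function's Hessian via monotonicity of $F_*$ and $F^*$ in the matrix variable. The only presentational differences are that the paper first reduces to $p\ge 2$ and works with the explicit eigenvalue formulas for $F_*(0,\cdot)$ and $F^*(0,\cdot)$ and then invokes its Remark~\ref{viscosity consistency}, whereas you appeal directly to the abstract definitions of the semicontinuous relaxations and carry out the monotonicity step by hand.
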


\begin{proof}
We can always assume that $p\geq 2$ since,  otherwise, we can switch the roles of $\la$ and $\La$.
\par
Let us now take a sequence of points $y_n\in\Om\setminus\{x_0\}$ such that $y_n\to x_0$ as $n\to\infty$.  From our assumption on $u$, we have that
$$
u-\frac{\ve^2}{p}\,\left\{\tr(\na^2 u) +(p-2)\,\frac{\lan \na^2u\na u,\na u\ran}{|\na u|^2}\right\}=0
$$
at each $y_n$.
Thus, both of the following inequalities hold at $y_n$:
$$
u-\frac1{p}\ve^2\,\left\{\tr(\na^2 u) +(p-2)\,\La(\na^2 u)\right\}\leq 0,
$$
and
$$
u-\frac1{p}\ve^2\,\left\{\tr(\na^2 u) +(p-2)\,\la(\na^2 u)\right\}\geq 0.
$$
By the continuity of the functions $\la(\na^2 u)$ and $\La(\na^2 u)$ in $\Om$, we then infer that the last two inequalities also hold at $x_0$.
The claim follows, by virtue of Remark \ref{viscosity consistency}, applied to the elliptic case.
\end{proof}

\section*{Acknowledgements}
This paper was partially supported by the Gruppo Nazionale Analisi Matematica, Probabilit\`a e Applicazioni (GNAMPA) of the Istituto Nazionale di Alta Matematica (INdAM).

\end{document}